\newcommand{\shin}[1]{}
\newcommand{\seo}[1]{}
\renewcommand{\shin}[1]{{\color{red}\textsf{(Shin said: #1)}}}
\renewcommand{\seo}[1]{{\color{blue}\textsf{(Seo said: #1)}}}
\def\markboth#1#2{%
 \begingroup
   \@temptokena{{#1}{#2}}\xdef\@themark{\the\@temptokena}%
   \mark{\the\@temptokena}%
 \endgroup
 \if@nobreak\ifvmode\nobreak\fi\fi}
\def\thanks#1{\g@addto@macro\thankses{\thanks{#1}}}
\theoremstyle{plain}
\newtheorem{thm}{Theorem}
\newtheorem{cor}[thm]{Corollary}
\newtheorem{lem}[thm]{Lemma}
\newtheorem{prop}[thm]{Proposition}
\theoremstyle{definition}
\newtheorem{defn}[thm]{Definition}
\theoremstyle{remark}
\newtheorem{rmk}{Remark}
\DeclareMathOperator{\rank}{rank}
\DeclareMathOperator{\ind}{I}
\DeclareMathOperator{\zc}{zc}
\newcommand{\abs}[1]{\left|#1\right|}
\newcommand{\set}[1]{\left\{#1\right\}}
\def\F{\mathcal{F}}
\def\S{\mathcal{S}}
\def\CM{\mathcal{CM}}
\def\msa{\overline{\mathcal{S}}^{A}_{\mathrm{nc}}}
\def\sa{\mathcal{S}^{A}_{\mathrm{nc}}}
\def\sb{\mathcal{S}^{B}_{\mathrm{nc}}}
\def\cycle{\mathrm{cycle}}
\title{Annular noncrossing permutations and minimal transitive factorizations}
\author{Jang Soo Kim}
\address[Jang Soo Kim]{School of
  Mathematics, University of Minnesota, Minneapolis, Minnesota 55455, USA}
\email{kimjs@math.umn.edu}
\author{Seunghyun Seo}
\address[Seunghyun Seo]{Department of Mathematics Education, Kangwon National University, Hyoja-dong, Chuncheon, 200-701, South Korea}
\email{shyunseo@kangwon.ac.kr}
\author{Heesung Shin}
\address[Heesung Shin]{Department of Mathematics, Inha University, 100 Inharo, Nam-gu, Incheon, 402-751, South Korea}
\email{shin@inha.ac.kr}
\keywords{minimal transitive factorization, annular noncrossing partition,  bijective proof}
\date{\today}
\begin{document}
\maketitle
\begin{abstract}
  We give two combinatorial proofs of Goulden and Jackson's formula for the
  number of minimal transitive factorizations of a permutation when the
  permutation has two cycles. We use the recent result of Goulden, Nica, and
  Oancea on the number of maximal chains of annular noncrossing partitions of
  type $B$.
\end{abstract}

\section{Introduction}
Given an integer partition $\lambda = (\lambda_1, \lambda_2, \dots, \lambda_{\ell})$ of $n$, denote by $\alpha_{\lambda}$
the permutation
$$(1 \dots \lambda_1) (\lambda_1+1 \dots \lambda_1+\lambda_2) \dots (n-\lambda_{\ell} + 1 \dots n)$$
of the set $\set{1,2,\dots,n}$ in the cycle notation.
Let $\F_{\lambda}$ be the set of all $(n+\ell-2)$-tuples $(\eta_1, \dots, \eta_{n+\ell-2})$ of transpositions such that
\begin{enumerate}[(1)]
\item $\eta_1 \cdots \eta_{n+\ell-2} = \alpha_{\lambda}$ and
\item $\set{\eta_1, \dots, \eta_{n+\ell-2}}$ generates the symmetric group $\S_{n}$.
\end{enumerate}
Such tuples are called \emph{minimal transitive factorizations} of the permutation $\alpha_{\lambda}$ of type~$\lambda$,
which are related to the branched covers of the sphere suggested by Hurwitz \cite{Hur91, Str96}.

In 1997, using algebraic methods Goulden and Jackson \cite{GJ97} proved that
\begin{equation}\label{eq:gj-formula}
\abs{\F_{\lambda}} = (n+\ell-2)! \, n^{\ell-3} \prod_{i=1}^{\ell} \frac{\lambda_i^{\lambda_i}}{(\lambda_i-1)!}.
\end{equation}
Bousquet-M\'elou and Schaeffer \cite{B-M2000} proved a more general formula than
\eqref{eq:gj-formula} and obtained \eqref{eq:gj-formula} using the principle of
inclusion and exclusion. Irving \cite{Irv09} studied the enumeration of minimal transitive factorizations into cycles instead of transpositions.

If $\lambda=(n)$, the formula \eqref{eq:gj-formula} yields
\begin{eqnarray}
\abs{\F_{(n)}} = n^{n-2}, \label{eq:n-formula}
\end{eqnarray}
and there are several combinatorial proofs of \eqref{eq:n-formula} \cite{Bia00, GY02, Mos89}.

If $\lambda = (p,q)$, the formula \eqref{eq:gj-formula} yields
\begin{eqnarray}
\abs{\F_{(p,q)}} = \frac{pq}{p+q} {p+q \choose q} p^p q^q. \label{eq:pq-formula}
\end{eqnarray}
A few special cases of \eqref{eq:pq-formula} have bijective proofs:
by Kim and Seo \cite{KS03} for the case $(p,q)=(1,n-1)$, and by Rattan
\cite{Rat06} for the cases $(p,q)=(2,n-2)$ and $(p,q)=(3,n-3)$. There are no
simple combinatorial proofs for other $(p,q)$.

Recently, Goulden et al. \cite{GNO11} showed that the number of maximal chains
in the poset $NC^{(B)} (p,q)$ of annular noncrossing partitions of type $B$ is
\begin{eqnarray}
{p+q \choose q} p^p q^q + \sum_{c \ge 1} 2c {p+q \choose p-c} p^{p-c} q^{q+c}.
 \label{eq:nc-formula}
\end{eqnarray}
Interestingly it turns out that half the sum in \eqref{eq:nc-formula} is equal
to the number in \eqref{eq:pq-formula}:
\[
\sum_{c \ge 1} c {p+q \choose p-c} p^{p-c} q^{q+c}
= \frac{pq}{p+q} {p+q \choose q} p^p q^q.
\]

In this paper we will give two combinatorial proofs of \eqref{eq:pq-formula}
using the results in \cite{GNO11}. The rest of this paper is organized as
follows.  In Section~\ref{sec:signed-permutations} we recall the poset
$\sb(p,q)$ of annular noncrossing permutations of type $B$ which is isomorphic
to the poset $NC^{(B)}(p,q)$ of annular noncrossing partitions of type $B$, and
show that the number of connected maximal chains in $\sb(p,q)$ is equal to
$\frac{2pq}{p+q} {p+q \choose q} p^p q^q$.  In Section~\ref{sec:2-1-map} we
prove that there is a 2-1 map from the set of connected maximal chains in
$\sb(p,q)$ to $\F_{(p,q)}$, thus completing a combinatorial proof of
\eqref{eq:pq-formula}. In Section~\ref{sec:mark-annul-noncr} we give another
combinatorial proof of \eqref{eq:pq-formula} by introducing marked annular
noncrossing permutations of type $A$.

\section{Connected maximal chains}
\label{sec:signed-permutations}

A \emph{signed permutation} is a permutation $\sigma$ on $\set{\pm 1,\ldots,\pm
  n}$ satisfying $\sigma(-i)=-\sigma(i)$ for all $i \in \set{1,\ldots,n}$. We
denote by $B_n$ the set of signed permutations on $\set{\pm 1,\ldots,\pm n}$.

We will use the two notations
\begin{align*}
[a_1~ a_2 \dots a_k] &= (a_1~ a_2 \dots a_k~ -a_1~ -a_2 \ldots -a_k),\\
((a_1~a_2 \dots a_k)) &= (a_1~ a_2 \dots a_k)(-a_1~ -a_2 \ldots -a_k),
\end{align*}
and call $[a_1~ a_2 \dots a_k]$ a \emph{zero cycle} and $((a_1~a_2 \dots a_k))$
a \emph{paired nonzero cycle}.  We also call the cycles $\epsilon_i:=[i]=(i~-i)$
and $((i~j))$ \emph{type $B$ transpositions}, or simply transpositions if there
is no possibility of confusion.

For $\pi\in B_n$, the \emph{absolute length} $\ell(\pi)$ is defined to be the
smallest integer $k$ such that $\pi$ can be written as a product of $k$ type $B$
transpositions. The \emph{absolute order} on $B_n$ is defined by
\[
\pi\leq \sigma \quad \Leftrightarrow \quad \ell(\sigma) = \ell(\pi) +
\ell(\pi^{-1}\sigma).
\]

From now, we fix positive integers $p$ and $q$.
The poset $\sb(p,q)$ of \emph{annular noncrossing permutations of type $B$} is defined by
\[
\sb(p,q) := [\epsilon, \gamma_{p,q}] = \set{\sigma \in B_{p+q} : \epsilon \leq
  \sigma \leq \gamma_{p,q}} \subseteq B_{p+q},
\]
where $\epsilon$ is the identity in $B_{p+q}$ and $ \gamma_{p,q} =[1 \dots
p][p+1 \dots p+q]$. Figure~\ref{fig:hasse} shows the Hasse diagram for
$\sb(2,1)$. Then $\sb(p,q)$ is a graded poset with rank function
\begin{equation}
\label{eq:rankB}
\rank(\sigma) = (p+q) - (\text{$\#$ of paired nonzero cycles of $\sigma$}).
\end{equation}

\begin{figure}[t]
$$
\xy
\small
\xymatrixcolsep{-3px}
\xymatrix{
 &&&& \gamma_{2,1}=[1~2][3] \ar@{-}[lllld] \ar@{-}[llld] \ar@{-}[lld]
\ar@{-}[ld] \ar@{-}[d] \ar@{-}[rd] \ar@{-}[rrd] \ar@{-}[rrrd]
\ar@{-}[rrrrd] &&&& \\
 [1][3] \ar@{-}[rdd] \ar@{-}[rrdd] \ar@{-}[rrrdd] \ar@{-}[rrrrdd]
 & ((1~2))[3] \ar@{-}[ldd] \ar@{-}[rrrdd]
 & ((1~2~3)) \ar@{-}[lldd] \ar@{-}[dd] \ar@{-}[rrrdd]
 & ((1~2~{-3})) \ar@{-}[llldd] \ar@{-}[dd] \ar@{-}[rrrdd]
 & [1~2] \ar@{-}[lllldd] \ar@{-}[llldd] \ar@{-}[rrrdd] \ar@{-}[rrrrdd]
 & ((1~{-3}~{-2})) \ar@{-}[lldd] \ar@{-}[dd] \ar@{-}[rrrdd]
 & ((1~3~{-2})) \ar@{-}[lllldd] \ar@{-}[dd] \ar@{-}[rrdd]
 & ((1~{-2}))[3] \ar@{-}[llldd] \ar@{-}[rdd]
 & [2][3] \ar@{-}[lllldd] \ar@{-}[llldd] \ar@{-}[lldd] \ar@{-}[ldd]
 \\
 &&&&&&&&\\
 ((1~2)) & [1] & ((1~3)) & ((1~{-3})) & [3] & ((2~3)) & ((2~{-3})) &
[2] & ((1~{-2})) \\
 &&&& \epsilon \ar@{-}[llllu] \ar@{-}[lllu] \ar@{-}[llu] \ar@{-}[lu]
\ar@{-}[u] \ar@{-}[ru] \ar@{-}[rru] \ar@{-}[rrru] \ar@{-}[rrrru] &&&&
}
\endxy
$$
\caption{The Hasse diagram for $\sb(2,1)$.}
\label{fig:hasse}
\end{figure}

Nica and Oancea \cite{NO09} showed that $\sigma\in\sb(p,q)$ if and only if
$\sigma$ can be drawn without crossing arrows inside an annulus in which the
outer circle has integers $1,2,\dots,p,-1,-2,\dots,-p$ in clockwise order and
the inner circle has integers $p+1,p+2,\dots,p+q,-p-1,-p-2,\dots,-p-q$ in
counterclockwise order, see Figure~\ref{fig:annulus}. They also showed that
$\sb(p,q)$ is isomorphic to the poset $NC^{(B)}(p,q)$ of annular noncrossing
partitions of type $B$.

\begin{figure}[t]
\centering
\ifpdf
\begin{pgfpicture}{-5.64mm}{-5.71mm}{102.78mm}{48.43mm}
\pgfsetxvec{\pgfpoint{0.70mm}{0mm}}
\pgfsetyvec{\pgfpoint{0mm}{0.70mm}}
\color[rgb]{0,0,0}\pgfsetlinewidth{0.30mm}\pgfsetdash{}{0mm}
\pgfsetlinewidth{0.60mm}\pgfcircle[stroke]{\pgfxy(30.00,30.00)}{21.16mm}
\pgfcircle[stroke]{\pgfxy(30.00,30.00)}{7.00mm}
\pgfcircle[fill]{\pgfxy(30.00,60.00)}{0.70mm}
\pgfsetlinewidth{0.30mm}\pgfcircle[stroke]{\pgfxy(30.00,60.00)}{0.70mm}
\pgfcircle[fill]{\pgfxy(30.00,0.00)}{0.70mm}
\pgfcircle[stroke]{\pgfxy(30.00,0.00)}{0.70mm}
\pgfcircle[fill]{\pgfxy(60.00,30.00)}{0.70mm}
\pgfcircle[stroke]{\pgfxy(60.00,30.00)}{0.70mm}
\pgfcircle[fill]{\pgfxy(0.00,30.00)}{0.70mm}
\pgfcircle[stroke]{\pgfxy(0.00,30.00)}{0.70mm}
\pgfcircle[fill]{\pgfxy(8.77,51.31)}{0.70mm}
\pgfcircle[stroke]{\pgfxy(8.77,51.31)}{0.70mm}
\pgfcircle[fill]{\pgfxy(51.28,51.14)}{0.70mm}
\pgfcircle[stroke]{\pgfxy(51.28,51.14)}{0.70mm}
\pgfcircle[fill]{\pgfxy(51.28,8.43)}{0.70mm}
\pgfcircle[stroke]{\pgfxy(51.28,8.43)}{0.70mm}
\pgfcircle[fill]{\pgfxy(8.77,8.58)}{0.70mm}
\pgfcircle[stroke]{\pgfxy(8.77,8.58)}{0.70mm}
\pgfcircle[fill]{\pgfxy(30.00,40.00)}{0.70mm}
\pgfcircle[stroke]{\pgfxy(30.00,40.00)}{0.70mm}
\pgfcircle[fill]{\pgfxy(30.00,20.00)}{0.70mm}
\pgfcircle[stroke]{\pgfxy(30.00,20.00)}{0.70mm}
\pgfcircle[fill]{\pgfxy(38.06,35.60)}{0.70mm}
\pgfcircle[stroke]{\pgfxy(38.06,35.60)}{0.70mm}
\pgfcircle[fill]{\pgfxy(38.34,24.85)}{0.70mm}
\pgfcircle[stroke]{\pgfxy(38.34,24.85)}{0.70mm}
\pgfcircle[fill]{\pgfxy(21.37,24.85)}{0.70mm}
\pgfcircle[stroke]{\pgfxy(21.37,24.85)}{0.70mm}
\pgfcircle[fill]{\pgfxy(21.51,35.60)}{0.70mm}
\pgfcircle[stroke]{\pgfxy(21.51,35.60)}{0.70mm}
\pgfmoveto{\pgfxy(20.94,25.42)}\pgfcurveto{\pgfxy(17.74,28.20)}{\pgfxy(13.49,23.54)}{\pgfxy(16.56,20.61)}\pgfcurveto{\pgfxy(19.40,17.89)}{\pgfxy(23.66,21.76)}{\pgfxy(21.22,24.85)}\pgfstroke
\pgfmoveto{\pgfxy(8.77,51.31)}\pgfcurveto{\pgfxy(8.79,46.87)}{\pgfxy(7.87,42.49)}{\pgfxy(6.09,38.43)}\pgfcurveto{\pgfxy(4.74,35.38)}{\pgfxy(2.93,32.57)}{\pgfxy(0.71,30.09)}\pgfstroke
\pgfmoveto{\pgfxy(0.43,30.09)}\pgfcurveto{\pgfxy(2.21,27.86)}{\pgfxy(3.73,25.44)}{\pgfxy(4.95,22.87)}\pgfcurveto{\pgfxy(7.08,18.39)}{\pgfxy(8.29,13.53)}{\pgfxy(8.49,8.58)}\pgfstroke
\pgfmoveto{\pgfxy(8.77,51.45)}\pgfcurveto{\pgfxy(10.34,44.48)}{\pgfxy(11.14,37.37)}{\pgfxy(11.18,30.23)}\pgfcurveto{\pgfxy(11.22,23.09)}{\pgfxy(10.49,15.98)}{\pgfxy(9.00,9.00)}\pgfstroke
\pgfmoveto{\pgfxy(30.00,40.13)}\pgfcurveto{\pgfxy(29.00,43.05)}{\pgfxy(25.55,44.29)}{\pgfxy(22.92,42.68)}\pgfcurveto{\pgfxy(20.52,41.21)}{\pgfxy(19.93,37.97)}{\pgfxy(21.65,35.74)}\pgfstroke
\pgfmoveto{\pgfxy(21.37,35.46)}\pgfcurveto{\pgfxy(18.49,39.65)}{\pgfxy(17.76,44.94)}{\pgfxy(19.38,49.75)}\pgfcurveto{\pgfxy(21.07,54.74)}{\pgfxy(25.08,58.59)}{\pgfxy(30.14,60.08)}\pgfstroke
\pgfmoveto{\pgfxy(30.14,60.08)}\pgfcurveto{\pgfxy(30.14,60.08)}{\pgfxy(30.14,39.71)}{\pgfxy(30.14,39.71)}\pgfstroke
\pgfmoveto{\pgfxy(11.04,31.08)}\pgflineto{\pgfxy(11.18,29.10)}\pgfstroke
\pgfmoveto{\pgfxy(11.18,29.10)}\pgflineto{\pgfxy(11.68,31.94)}\pgflineto{\pgfxy(10.28,31.84)}\pgflineto{\pgfxy(11.18,29.10)}\pgfclosepath\pgffill
\pgfmoveto{\pgfxy(11.18,29.10)}\pgflineto{\pgfxy(11.68,31.94)}\pgflineto{\pgfxy(10.28,31.84)}\pgflineto{\pgfxy(11.18,29.10)}\pgfclosepath\pgfstroke
\pgfmoveto{\pgfxy(15.71,22.30)}\pgflineto{\pgfxy(16.84,20.75)}\pgfstroke
\pgfmoveto{\pgfxy(16.84,20.75)}\pgflineto{\pgfxy(15.76,23.42)}\pgflineto{\pgfxy(14.63,22.60)}\pgflineto{\pgfxy(16.84,20.75)}\pgfclosepath\pgffill
\pgfmoveto{\pgfxy(16.84,20.75)}\pgflineto{\pgfxy(15.76,23.42)}\pgflineto{\pgfxy(14.63,22.60)}\pgflineto{\pgfxy(16.84,20.75)}\pgfclosepath\pgfstroke
\pgfsetlinewidth{0.15mm}\pgfmoveto{\pgfxy(30.00,50.88)}\pgflineto{\pgfxy(30.00,48.62)}\pgfstroke
\pgfmoveto{\pgfxy(30.00,48.62)}\pgflineto{\pgfxy(30.70,51.42)}\pgflineto{\pgfxy(29.30,51.42)}\pgflineto{\pgfxy(30.00,48.62)}\pgfclosepath\pgffill
\pgfmoveto{\pgfxy(30.00,48.62)}\pgflineto{\pgfxy(30.70,51.42)}\pgflineto{\pgfxy(29.30,51.42)}\pgflineto{\pgfxy(30.00,48.62)}\pgfclosepath\pgfstroke
\pgfmoveto{\pgfxy(30.00,50.88)}\pgflineto{\pgfxy(30.00,48.62)}\pgfstroke
\pgfmoveto{\pgfxy(30.00,48.62)}\pgflineto{\pgfxy(30.70,51.42)}\pgflineto{\pgfxy(29.30,51.42)}\pgflineto{\pgfxy(30.00,48.62)}\pgfclosepath\pgffill
\pgfmoveto{\pgfxy(30.00,48.62)}\pgflineto{\pgfxy(30.70,51.42)}\pgflineto{\pgfxy(29.30,51.42)}\pgflineto{\pgfxy(30.00,48.62)}\pgfclosepath\pgfstroke
\pgfmoveto{\pgfxy(30.14,60.08)}\pgfcurveto{\pgfxy(30.14,60.08)}{\pgfxy(30.14,39.71)}{\pgfxy(30.14,39.71)}\pgfstroke
\pgfmoveto{\pgfxy(21.37,35.46)}\pgfcurveto{\pgfxy(18.49,39.65)}{\pgfxy(17.76,44.94)}{\pgfxy(19.38,49.75)}\pgfcurveto{\pgfxy(21.07,54.74)}{\pgfxy(25.08,58.59)}{\pgfxy(30.14,60.08)}\pgfstroke
\pgfmoveto{\pgfxy(30.00,40.13)}\pgfcurveto{\pgfxy(29.00,43.05)}{\pgfxy(25.55,44.29)}{\pgfxy(22.92,42.68)}\pgfcurveto{\pgfxy(20.52,41.21)}{\pgfxy(19.93,37.97)}{\pgfxy(21.65,35.74)}\pgfstroke
\pgfmoveto{\pgfxy(11.04,31.08)}\pgflineto{\pgfxy(11.18,29.10)}\pgfstroke
\pgfmoveto{\pgfxy(11.18,29.10)}\pgflineto{\pgfxy(11.68,31.94)}\pgflineto{\pgfxy(10.28,31.84)}\pgflineto{\pgfxy(11.18,29.10)}\pgfclosepath\pgffill
\pgfmoveto{\pgfxy(11.18,29.10)}\pgflineto{\pgfxy(11.68,31.94)}\pgflineto{\pgfxy(10.28,31.84)}\pgflineto{\pgfxy(11.18,29.10)}\pgfclosepath\pgfstroke
\pgfmoveto{\pgfxy(8.77,51.45)}\pgfcurveto{\pgfxy(10.34,44.48)}{\pgfxy(11.14,37.37)}{\pgfxy(11.18,30.23)}\pgfcurveto{\pgfxy(11.22,23.09)}{\pgfxy(10.49,15.98)}{\pgfxy(9.00,9.00)}\pgfstroke
\pgfmoveto{\pgfxy(8.77,51.31)}\pgfcurveto{\pgfxy(8.79,46.87)}{\pgfxy(7.87,42.49)}{\pgfxy(6.09,38.43)}\pgfcurveto{\pgfxy(4.74,35.38)}{\pgfxy(2.93,32.57)}{\pgfxy(0.71,30.09)}\pgfstroke
\pgfmoveto{\pgfxy(0.43,30.09)}\pgfcurveto{\pgfxy(2.21,27.86)}{\pgfxy(3.73,25.44)}{\pgfxy(4.95,22.87)}\pgfcurveto{\pgfxy(7.08,18.39)}{\pgfxy(8.29,13.53)}{\pgfxy(8.49,8.58)}\pgfstroke
\pgfmoveto{\pgfxy(20.94,25.42)}\pgfcurveto{\pgfxy(17.74,28.20)}{\pgfxy(13.49,23.54)}{\pgfxy(16.56,20.61)}\pgfcurveto{\pgfxy(19.40,17.89)}{\pgfxy(23.66,21.76)}{\pgfxy(21.22,24.85)}\pgfstroke
\pgfmoveto{\pgfxy(15.71,22.30)}\pgflineto{\pgfxy(16.84,20.75)}\pgfstroke
\pgfmoveto{\pgfxy(16.84,20.75)}\pgflineto{\pgfxy(15.76,23.42)}\pgflineto{\pgfxy(14.63,22.60)}\pgflineto{\pgfxy(16.84,20.75)}\pgfclosepath\pgffill
\pgfmoveto{\pgfxy(16.84,20.75)}\pgflineto{\pgfxy(15.76,23.42)}\pgflineto{\pgfxy(14.63,22.60)}\pgflineto{\pgfxy(16.84,20.75)}\pgfclosepath\pgfstroke
\pgfsetlinewidth{0.30mm}\pgfmoveto{\pgfxy(30.00,9.01)}\pgflineto{\pgfxy(30.00,11.27)}\pgfstroke
\pgfmoveto{\pgfxy(30.00,11.27)}\pgflineto{\pgfxy(29.30,8.47)}\pgflineto{\pgfxy(30.70,8.47)}\pgflineto{\pgfxy(30.00,11.27)}\pgfclosepath\pgffill
\pgfmoveto{\pgfxy(30.00,11.27)}\pgflineto{\pgfxy(29.30,8.47)}\pgflineto{\pgfxy(30.70,8.47)}\pgflineto{\pgfxy(30.00,11.27)}\pgfclosepath\pgfstroke
\pgfmoveto{\pgfxy(30.00,9.01)}\pgflineto{\pgfxy(30.00,11.27)}\pgfstroke
\pgfmoveto{\pgfxy(30.00,11.27)}\pgflineto{\pgfxy(29.30,8.47)}\pgflineto{\pgfxy(30.70,8.47)}\pgflineto{\pgfxy(30.00,11.27)}\pgfclosepath\pgffill
\pgfmoveto{\pgfxy(30.00,11.27)}\pgflineto{\pgfxy(29.30,8.47)}\pgflineto{\pgfxy(30.70,8.47)}\pgflineto{\pgfxy(30.00,11.27)}\pgfclosepath\pgfstroke
\pgfmoveto{\pgfxy(29.86,-0.19)}\pgfcurveto{\pgfxy(29.86,-0.19)}{\pgfxy(29.86,20.18)}{\pgfxy(29.86,20.18)}\pgfstroke
\pgfmoveto{\pgfxy(38.63,24.43)}\pgfcurveto{\pgfxy(41.51,20.24)}{\pgfxy(42.24,14.95)}{\pgfxy(40.62,10.14)}\pgfcurveto{\pgfxy(38.93,5.15)}{\pgfxy(34.92,1.30)}{\pgfxy(29.86,-0.19)}\pgfstroke
\pgfmoveto{\pgfxy(30.00,19.76)}\pgfcurveto{\pgfxy(31.00,16.84)}{\pgfxy(34.45,15.60)}{\pgfxy(37.08,17.21)}\pgfcurveto{\pgfxy(39.48,18.68)}{\pgfxy(40.07,21.91)}{\pgfxy(38.35,24.14)}\pgfstroke
\pgfmoveto{\pgfxy(48.96,28.81)}\pgflineto{\pgfxy(48.82,30.79)}\pgfstroke
\pgfmoveto{\pgfxy(48.82,30.79)}\pgflineto{\pgfxy(48.32,27.95)}\pgflineto{\pgfxy(49.72,28.05)}\pgflineto{\pgfxy(48.82,30.79)}\pgfclosepath\pgffill
\pgfmoveto{\pgfxy(48.82,30.79)}\pgflineto{\pgfxy(48.32,27.95)}\pgflineto{\pgfxy(49.72,28.05)}\pgflineto{\pgfxy(48.82,30.79)}\pgfclosepath\pgfstroke
\pgfmoveto{\pgfxy(51.23,8.44)}\pgfcurveto{\pgfxy(49.66,15.41)}{\pgfxy(48.86,22.52)}{\pgfxy(48.82,29.66)}\pgfcurveto{\pgfxy(48.78,36.79)}{\pgfxy(49.51,43.91)}{\pgfxy(51.00,50.89)}\pgfstroke
\pgfmoveto{\pgfxy(51.23,8.58)}\pgfcurveto{\pgfxy(51.21,13.01)}{\pgfxy(52.13,17.40)}{\pgfxy(53.91,21.46)}\pgfcurveto{\pgfxy(55.26,24.50)}{\pgfxy(57.07,27.32)}{\pgfxy(59.29,29.80)}\pgfstroke
\pgfmoveto{\pgfxy(59.57,29.80)}\pgfcurveto{\pgfxy(57.79,32.02)}{\pgfxy(56.27,34.45)}{\pgfxy(55.05,37.02)}\pgfcurveto{\pgfxy(52.92,41.49)}{\pgfxy(51.71,46.35)}{\pgfxy(51.51,51.31)}\pgfstroke
\pgfmoveto{\pgfxy(39.06,34.47)}\pgfcurveto{\pgfxy(42.26,31.69)}{\pgfxy(46.51,36.35)}{\pgfxy(43.45,39.28)}\pgfcurveto{\pgfxy(40.60,42.00)}{\pgfxy(36.35,38.12)}{\pgfxy(38.78,35.04)}\pgfstroke
\pgfmoveto{\pgfxy(44.29,37.58)}\pgflineto{\pgfxy(43.16,39.14)}\pgfstroke
\pgfmoveto{\pgfxy(43.16,39.14)}\pgflineto{\pgfxy(44.24,36.46)}\pgflineto{\pgfxy(45.38,37.29)}\pgflineto{\pgfxy(43.16,39.14)}\pgfclosepath\pgffill
\pgfmoveto{\pgfxy(43.16,39.14)}\pgflineto{\pgfxy(44.24,36.46)}\pgflineto{\pgfxy(45.38,37.29)}\pgflineto{\pgfxy(43.16,39.14)}\pgfclosepath\pgfstroke
\pgfputat{\pgfxy(30.14,63.19)}{\pgfbox[bottom,left]{\fontsize{7.97}{9.56}\selectfont \makebox[0pt]{$1$}}}
\pgfputat{\pgfxy(54.33,52.44)}{\pgfbox[bottom,left]{\fontsize{7.97}{9.56}\selectfont \makebox[0pt]{$2$}}}
\pgfputat{\pgfxy(62.96,29.10)}{\pgfbox[bottom,left]{\fontsize{7.97}{9.56}\selectfont \makebox[0pt]{$3$}}}
\pgfputat{\pgfxy(53.62,5.19)}{\pgfbox[bottom,left]{\fontsize{7.97}{9.56}\selectfont \makebox[0pt]{$4$}}}
\pgfputat{\pgfxy(30.14,-4.43)}{\pgfbox[bottom,left]{\fontsize{7.97}{9.56}\selectfont \makebox[0pt]{$-1$}}}
\pgfputat{\pgfxy(7.22,4.20)}{\pgfbox[bottom,left]{\fontsize{7.97}{9.56}\selectfont \makebox[0pt]{$-2$}}}
\pgfputat{\pgfxy(-3.53,28.67)}{\pgfbox[bottom,left]{\fontsize{7.97}{9.56}\selectfont \makebox[0pt]{$-3$}}}
\pgfputat{\pgfxy(5.38,52.58)}{\pgfbox[bottom,left]{\fontsize{7.97}{9.56}\selectfont \makebox[0pt]{$-4$}}}
\pgfputat{\pgfxy(29.85,35.32)}{\pgfbox[bottom,left]{\fontsize{7.97}{9.56}\selectfont \makebox[0pt]{$5$}}}
\pgfputat{\pgfxy(23.29,32.81)}{\pgfbox[bottom,left]{\fontsize{7.97}{9.56}\selectfont $6$}}
\pgfputat{\pgfxy(23.08,25.26)}{\pgfbox[bottom,left]{\fontsize{7.97}{9.56}\selectfont $7$}}
\pgfputat{\pgfxy(36.53,32.92)}{\pgfbox[bottom,left]{\fontsize{7.97}{9.56}\selectfont \makebox[0pt][r]{$-7$}}}
\pgfputat{\pgfxy(29.85,22.30)}{\pgfbox[bottom,left]{\fontsize{7.97}{9.56}\selectfont \makebox[0pt]{$-5$}}}
\pgfputat{\pgfxy(36.43,24.95)}{\pgfbox[bottom,left]{\fontsize{7.97}{9.56}\selectfont \makebox[0pt][r]{$-6$}}}
\pgfsetlinewidth{0.60mm}\pgfcircle[stroke]{\pgfxy(110.01,30.00)}{21.16mm}
\pgfcircle[stroke]{\pgfxy(110.01,30.00)}{7.00mm}
\pgfcircle[fill]{\pgfxy(110.01,60.00)}{0.70mm}
\pgfsetlinewidth{0.30mm}\pgfcircle[stroke]{\pgfxy(110.01,60.00)}{0.70mm}
\pgfcircle[fill]{\pgfxy(110.01,0.00)}{0.70mm}
\pgfcircle[stroke]{\pgfxy(110.01,0.00)}{0.70mm}
\pgfcircle[fill]{\pgfxy(140.01,30.00)}{0.70mm}
\pgfcircle[stroke]{\pgfxy(140.01,30.00)}{0.70mm}
\pgfcircle[fill]{\pgfxy(80.01,30.00)}{0.70mm}
\pgfcircle[stroke]{\pgfxy(80.01,30.00)}{0.70mm}
\pgfcircle[fill]{\pgfxy(88.79,51.31)}{0.70mm}
\pgfcircle[stroke]{\pgfxy(88.79,51.31)}{0.70mm}
\pgfcircle[fill]{\pgfxy(131.30,51.14)}{0.70mm}
\pgfcircle[stroke]{\pgfxy(131.30,51.14)}{0.70mm}
\pgfcircle[fill]{\pgfxy(131.30,8.43)}{0.70mm}
\pgfcircle[stroke]{\pgfxy(131.30,8.43)}{0.70mm}
\pgfcircle[fill]{\pgfxy(88.79,8.58)}{0.70mm}
\pgfcircle[stroke]{\pgfxy(88.79,8.58)}{0.70mm}
\pgfcircle[fill]{\pgfxy(110.01,40.00)}{0.70mm}
\pgfcircle[stroke]{\pgfxy(110.01,40.00)}{0.70mm}
\pgfcircle[fill]{\pgfxy(110.01,20.00)}{0.70mm}
\pgfcircle[stroke]{\pgfxy(110.01,20.00)}{0.70mm}
\pgfcircle[fill]{\pgfxy(118.07,35.60)}{0.70mm}
\pgfcircle[stroke]{\pgfxy(118.07,35.60)}{0.70mm}
\pgfcircle[fill]{\pgfxy(118.36,24.85)}{0.70mm}
\pgfcircle[stroke]{\pgfxy(118.36,24.85)}{0.70mm}
\pgfcircle[fill]{\pgfxy(101.38,24.85)}{0.70mm}
\pgfcircle[stroke]{\pgfxy(101.38,24.85)}{0.70mm}
\pgfcircle[fill]{\pgfxy(101.52,35.60)}{0.70mm}
\pgfcircle[stroke]{\pgfxy(101.52,35.60)}{0.70mm}
\pgfputat{\pgfxy(110.15,63.19)}{\pgfbox[bottom,left]{\fontsize{7.97}{9.56}\selectfont \makebox[0pt]{$1$}}}
\pgfputat{\pgfxy(134.34,52.44)}{\pgfbox[bottom,left]{\fontsize{7.97}{9.56}\selectfont \makebox[0pt]{$2$}}}
\pgfputat{\pgfxy(142.97,29.10)}{\pgfbox[bottom,left]{\fontsize{7.97}{9.56}\selectfont \makebox[0pt]{$3$}}}
\pgfputat{\pgfxy(133.63,5.19)}{\pgfbox[bottom,left]{\fontsize{7.97}{9.56}\selectfont \makebox[0pt]{$4$}}}
\pgfputat{\pgfxy(110.15,-4.43)}{\pgfbox[bottom,left]{\fontsize{7.97}{9.56}\selectfont \makebox[0pt]{$-1$}}}
\pgfputat{\pgfxy(87.23,4.20)}{\pgfbox[bottom,left]{\fontsize{7.97}{9.56}\selectfont \makebox[0pt]{$-2$}}}
\pgfputat{\pgfxy(76.48,28.67)}{\pgfbox[bottom,left]{\fontsize{7.97}{9.56}\selectfont \makebox[0pt]{$-3$}}}
\pgfputat{\pgfxy(85.39,52.58)}{\pgfbox[bottom,left]{\fontsize{7.97}{9.56}\selectfont \makebox[0pt]{$-4$}}}
\pgfputat{\pgfxy(109.87,35.32)}{\pgfbox[bottom,left]{\fontsize{7.97}{9.56}\selectfont \makebox[0pt]{$5$}}}
\pgfputat{\pgfxy(103.30,32.81)}{\pgfbox[bottom,left]{\fontsize{7.97}{9.56}\selectfont $6$}}
\pgfputat{\pgfxy(103.10,25.26)}{\pgfbox[bottom,left]{\fontsize{7.97}{9.56}\selectfont $7$}}
\pgfputat{\pgfxy(116.54,32.92)}{\pgfbox[bottom,left]{\fontsize{7.97}{9.56}\selectfont \makebox[0pt][r]{$-7$}}}
\pgfputat{\pgfxy(109.87,22.30)}{\pgfbox[bottom,left]{\fontsize{7.97}{9.56}\selectfont \makebox[0pt]{$-5$}}}
\pgfputat{\pgfxy(116.44,24.95)}{\pgfbox[bottom,left]{\fontsize{7.97}{9.56}\selectfont \makebox[0pt][r]{$-6$}}}
\pgfmoveto{\pgfxy(110.06,59.91)}\pgfcurveto{\pgfxy(110.06,59.91)}{\pgfxy(110.06,40.01)}{\pgfxy(110.06,40.01)}\pgfstroke
\pgfmoveto{\pgfxy(110.06,40.15)}\pgfcurveto{\pgfxy(110.47,43.00)}{\pgfxy(113.28,44.86)}{\pgfxy(116.06,44.10)}\pgfcurveto{\pgfxy(119.81,43.09)}{\pgfxy(121.10,38.42)}{\pgfxy(118.40,35.62)}\pgfstroke
\pgfmoveto{\pgfxy(118.40,35.62)}\pgfcurveto{\pgfxy(118.40,35.62)}{\pgfxy(131.28,51.13)}{\pgfxy(131.28,51.13)}\pgfstroke
\pgfmoveto{\pgfxy(131.28,50.98)}\pgfcurveto{\pgfxy(131.76,40.78)}{\pgfxy(129.85,30.61)}{\pgfxy(125.71,21.28)}\pgfcurveto{\pgfxy(122.13,13.20)}{\pgfxy(116.96,5.94)}{\pgfxy(110.50,-0.08)}\pgfstroke
\pgfmoveto{\pgfxy(139.91,30.06)}\pgfcurveto{\pgfxy(134.95,29.81)}{\pgfxy(130.57,26.74)}{\pgfxy(128.64,22.16)}\pgfcurveto{\pgfxy(126.77,17.69)}{\pgfxy(127.56,12.54)}{\pgfxy(130.69,8.84)}\pgfstroke
\pgfmoveto{\pgfxy(133.84,21.37)}\pgflineto{\pgfxy(132.65,18.42)}\pgfstroke
\pgfmoveto{\pgfxy(132.65,18.42)}\pgflineto{\pgfxy(134.35,20.76)}\pgflineto{\pgfxy(133.05,21.28)}\pgflineto{\pgfxy(132.65,18.42)}\pgfclosepath\pgffill
\pgfmoveto{\pgfxy(132.65,18.42)}\pgflineto{\pgfxy(134.35,20.76)}\pgflineto{\pgfxy(133.05,21.28)}\pgflineto{\pgfxy(132.65,18.42)}\pgfclosepath\pgfstroke
\pgfmoveto{\pgfxy(131.28,8.69)}\pgfcurveto{\pgfxy(131.10,12.76)}{\pgfxy(131.85,16.81)}{\pgfxy(133.47,20.55)}\pgfcurveto{\pgfxy(134.99,24.05)}{\pgfxy(137.23,27.19)}{\pgfxy(140.05,29.76)}\pgfstroke
\pgfmoveto{\pgfxy(123.34,23.82)}\pgflineto{\pgfxy(122.36,21.44)}\pgfstroke
\pgfmoveto{\pgfxy(122.36,21.44)}\pgflineto{\pgfxy(124.07,23.76)}\pgflineto{\pgfxy(122.78,24.29)}\pgflineto{\pgfxy(122.36,21.44)}\pgfclosepath\pgffill
\pgfmoveto{\pgfxy(122.36,21.44)}\pgflineto{\pgfxy(124.07,23.76)}\pgflineto{\pgfxy(122.78,24.29)}\pgflineto{\pgfxy(122.36,21.44)}\pgfclosepath\pgfstroke
\pgfmoveto{\pgfxy(118.55,24.79)}\pgfcurveto{\pgfxy(116.82,22.10)}{\pgfxy(120.66,19.36)}{\pgfxy(122.64,21.86)}\pgfcurveto{\pgfxy(124.81,24.60)}{\pgfxy(120.69,27.83)}{\pgfxy(118.55,25.08)}\pgfstroke
\pgfmoveto{\pgfxy(109.91,50.83)}\pgflineto{\pgfxy(110.06,49.23)}\pgfstroke
\pgfmoveto{\pgfxy(109.91,51.71)}\pgflineto{\pgfxy(110.06,48.20)}\pgfstroke
\pgfmoveto{\pgfxy(110.06,48.20)}\pgflineto{\pgfxy(110.64,51.03)}\pgflineto{\pgfxy(109.24,50.97)}\pgflineto{\pgfxy(110.06,48.20)}\pgfclosepath\pgffill
\pgfmoveto{\pgfxy(110.06,48.20)}\pgflineto{\pgfxy(110.64,51.03)}\pgflineto{\pgfxy(109.24,50.97)}\pgflineto{\pgfxy(110.06,48.20)}\pgfclosepath\pgfstroke
\pgfmoveto{\pgfxy(110.15,0.02)}\pgfcurveto{\pgfxy(110.15,0.02)}{\pgfxy(110.15,19.92)}{\pgfxy(110.15,19.92)}\pgfstroke
\pgfmoveto{\pgfxy(110.15,19.77)}\pgfcurveto{\pgfxy(109.75,16.92)}{\pgfxy(106.93,15.07)}{\pgfxy(104.15,15.82)}\pgfcurveto{\pgfxy(100.40,16.84)}{\pgfxy(99.11,21.51)}{\pgfxy(101.81,24.31)}\pgfstroke
\pgfmoveto{\pgfxy(101.81,24.31)}\pgfcurveto{\pgfxy(101.81,24.31)}{\pgfxy(88.94,8.80)}{\pgfxy(88.94,8.80)}\pgfstroke
\pgfmoveto{\pgfxy(88.94,8.94)}\pgfcurveto{\pgfxy(88.45,19.14)}{\pgfxy(90.36,29.31)}{\pgfxy(94.50,38.65)}\pgfcurveto{\pgfxy(98.08,46.72)}{\pgfxy(103.25,53.99)}{\pgfxy(109.71,60.01)}\pgfstroke
\pgfmoveto{\pgfxy(80.30,29.87)}\pgfcurveto{\pgfxy(85.27,30.12)}{\pgfxy(89.65,33.19)}{\pgfxy(91.57,37.77)}\pgfcurveto{\pgfxy(93.45,42.24)}{\pgfxy(92.66,47.39)}{\pgfxy(89.52,51.08)}\pgfstroke
\pgfmoveto{\pgfxy(86.37,38.56)}\pgflineto{\pgfxy(87.56,41.50)}\pgfstroke
\pgfmoveto{\pgfxy(87.56,41.50)}\pgflineto{\pgfxy(85.87,39.17)}\pgflineto{\pgfxy(87.16,38.65)}\pgflineto{\pgfxy(87.56,41.50)}\pgfclosepath\pgffill
\pgfmoveto{\pgfxy(87.56,41.50)}\pgflineto{\pgfxy(85.87,39.17)}\pgflineto{\pgfxy(87.16,38.65)}\pgflineto{\pgfxy(87.56,41.50)}\pgfclosepath\pgfstroke
\pgfmoveto{\pgfxy(88.94,51.23)}\pgfcurveto{\pgfxy(89.11,47.17)}{\pgfxy(88.36,43.11)}{\pgfxy(86.74,39.38)}\pgfcurveto{\pgfxy(85.23,35.87)}{\pgfxy(82.98,32.73)}{\pgfxy(80.16,30.16)}\pgfstroke
\pgfmoveto{\pgfxy(96.87,36.11)}\pgflineto{\pgfxy(97.86,38.49)}\pgfstroke
\pgfmoveto{\pgfxy(97.86,38.49)}\pgflineto{\pgfxy(96.14,36.17)}\pgflineto{\pgfxy(97.44,35.63)}\pgflineto{\pgfxy(97.86,38.49)}\pgfclosepath\pgffill
\pgfmoveto{\pgfxy(97.86,38.49)}\pgflineto{\pgfxy(96.14,36.17)}\pgflineto{\pgfxy(97.44,35.63)}\pgflineto{\pgfxy(97.86,38.49)}\pgfclosepath\pgfstroke
\pgfmoveto{\pgfxy(101.67,35.14)}\pgfcurveto{\pgfxy(103.40,37.83)}{\pgfxy(99.56,40.57)}{\pgfxy(97.57,38.06)}\pgfcurveto{\pgfxy(95.41,35.33)}{\pgfxy(99.52,32.09)}{\pgfxy(101.67,34.84)}\pgfstroke
\pgfputat{\pgfxy(0.00,61.00)}{\pgfbox[bottom,left]{\fontsize{9.96}{11.95}\selectfont \makebox[0pt]{$\pi$}}}
\pgfputat{\pgfxy(80.00,61.00)}{\pgfbox[bottom,left]{\fontsize{9.96}{11.95}\selectfont \makebox[0pt]{$\sigma$}}}
\end{pgfpicture}%
\else
\setlength{\unitlength}{0.70 mm}%
\begin{picture}(154.89,77.34)(-8.06,-8.15)
\thicklines\rqbezier(30.00,60.23)(-0.23,60.23)(-0.23,30.00)(0.70711)\rqbezier(-0.23,30.00)(-0.23,-0.23)(30.00,-0.23)(0.70711)\rqbezier(30.00,-0.23)(60.23,-0.23)(60.23,30.00)(0.70711)\rqbezier(60.23,30.00)(60.23,60.23)(30.00,60.23)(0.70711)
\thicklines\rqbezier(30.00,40.00)(20.00,40.00)(20.00,30.00)(0.70711)\rqbezier(20.00,30.00)(20.00,20.00)(30.00,20.00)(0.70711)\rqbezier(30.00,20.00)(40.00,20.00)(40.00,30.00)(0.70711)\rqbezier(40.00,30.00)(40.00,40.00)(30.00,40.00)(0.70711)
\thinlines\rqbezier(30.00,61.00)(29.00,61.00)(29.00,60.00)(0.70711)\rqbezier(29.00,60.00)(29.00,59.00)(30.00,59.00)(0.70711)\rqbezier(30.00,59.00)(31.00,59.00)(31.00,60.00)(0.70711)\rqbezier(31.00,60.00)(31.00,61.00)(30.00,61.00)(0.70711)
\thinlines\rqbezier(30.00,1.00)(29.00,1.00)(29.00,0.00)(0.70711)\rqbezier(29.00,0.00)(29.00,-1.00)(30.00,-1.00)(0.70711)\rqbezier(30.00,-1.00)(31.00,-1.00)(31.00,0.00)(0.70711)\rqbezier(31.00,0.00)(31.00,1.00)(30.00,1.00)(0.70711)
\thinlines\rqbezier(60.00,31.00)(59.00,31.00)(59.00,30.00)(0.70711)\rqbezier(59.00,30.00)(59.00,29.00)(60.00,29.00)(0.70711)\rqbezier(60.00,29.00)(61.00,29.00)(61.00,30.00)(0.70711)\rqbezier(61.00,30.00)(61.00,31.00)(60.00,31.00)(0.70711)
\thinlines\rqbezier(0.00,31.00)(-1.00,31.00)(-1.00,30.00)(0.70711)\rqbezier(-1.00,30.00)(-1.00,29.00)(0.00,29.00)(0.70711)\rqbezier(0.00,29.00)(1.00,29.00)(1.00,30.00)(0.70711)\rqbezier(1.00,30.00)(1.00,31.00)(0.00,31.00)(0.70711)
\thinlines\rqbezier(8.77,52.31)(7.77,52.31)(7.77,51.31)(0.70711)\rqbezier(7.77,51.31)(7.77,50.31)(8.77,50.31)(0.70711)\rqbezier(8.77,50.31)(9.77,50.31)(9.77,51.31)(0.70711)\rqbezier(9.77,51.31)(9.77,52.31)(8.77,52.31)(0.70711)
\thinlines\rqbezier(51.28,52.14)(50.28,52.14)(50.28,51.14)(0.70711)\rqbezier(50.28,51.14)(50.28,50.14)(51.28,50.14)(0.70711)\rqbezier(51.28,50.14)(52.28,50.14)(52.28,51.14)(0.70711)\rqbezier(52.28,51.14)(52.28,52.14)(51.28,52.14)(0.70711)
\thinlines\rqbezier(51.28,9.43)(50.28,9.43)(50.28,8.43)(0.70711)\rqbezier(50.28,8.43)(50.28,7.43)(51.28,7.43)(0.70711)\rqbezier(51.28,7.43)(52.28,7.43)(52.28,8.43)(0.70711)\rqbezier(52.28,8.43)(52.28,9.43)(51.28,9.43)(0.70711)
\thinlines\rqbezier(8.77,9.58)(7.77,9.58)(7.77,8.58)(0.70711)\rqbezier(7.77,8.58)(7.77,7.58)(8.77,7.58)(0.70711)\rqbezier(8.77,7.58)(9.77,7.58)(9.77,8.58)(0.70711)\rqbezier(9.77,8.58)(9.77,9.58)(8.77,9.58)(0.70711)
\thinlines\rqbezier(30.00,41.00)(29.00,41.00)(29.00,40.00)(0.70711)\rqbezier(29.00,40.00)(29.00,39.00)(30.00,39.00)(0.70711)\rqbezier(30.00,39.00)(31.00,39.00)(31.00,40.00)(0.70711)\rqbezier(31.00,40.00)(31.00,41.00)(30.00,41.00)(0.70711)
\thinlines\rqbezier(30.00,21.00)(29.00,21.00)(29.00,20.00)(0.70711)\rqbezier(29.00,20.00)(29.00,19.00)(30.00,19.00)(0.70711)\rqbezier(30.00,19.00)(31.00,19.00)(31.00,20.00)(0.70711)\rqbezier(31.00,20.00)(31.00,21.00)(30.00,21.00)(0.70711)
\thinlines\rqbezier(38.06,36.60)(37.06,36.60)(37.06,35.60)(0.70711)\rqbezier(37.06,35.60)(37.06,34.60)(38.06,34.60)(0.70711)\rqbezier(38.06,34.60)(39.06,34.60)(39.06,35.60)(0.70711)\rqbezier(39.06,35.60)(39.06,36.60)(38.06,36.60)(0.70711)
\thinlines\rqbezier(38.34,25.85)(37.34,25.85)(37.34,24.85)(0.70711)\rqbezier(37.34,24.85)(37.34,23.85)(38.34,23.85)(0.70711)\rqbezier(38.34,23.85)(39.34,23.85)(39.34,24.85)(0.70711)\rqbezier(39.34,24.85)(39.34,25.85)(38.34,25.85)(0.70711)
\thinlines\rqbezier(21.37,25.85)(20.37,25.85)(20.37,24.85)(0.70711)\rqbezier(20.37,24.85)(20.37,23.85)(21.37,23.85)(0.70711)\rqbezier(21.37,23.85)(22.37,23.85)(22.37,24.85)(0.70711)\rqbezier(22.37,24.85)(22.37,25.85)(21.37,25.85)(0.70711)
\thinlines\rqbezier(21.51,36.60)(20.51,36.60)(20.51,35.60)(0.70711)\rqbezier(20.51,35.60)(20.51,34.60)(21.51,34.60)(0.70711)\rqbezier(21.51,34.60)(22.51,34.60)(22.51,35.60)(0.70711)\rqbezier(22.51,35.60)(22.51,36.60)(21.51,36.60)(0.70711)
\thinlines\cbezier(20.94,25.42)(17.74,28.20)(13.49,23.54)(16.56,20.61)\cbezier(16.56,20.61)(19.40,17.89)(23.66,21.76)(21.22,24.85)
\thinlines\cbezier(8.77,51.31)(8.79,46.87)(7.87,42.49)(6.09,38.43)\cbezier(6.09,38.43)(4.74,35.38)(2.93,32.57)(0.71,30.09)
\thinlines\cbezier(0.43,30.09)(2.21,27.86)(3.73,25.44)(4.95,22.87)\cbezier(4.95,22.87)(7.08,18.39)(8.29,13.53)(8.49,8.58)
\thinlines\cbezier(8.77,51.45)(10.34,44.48)(11.14,37.37)(11.18,30.23)\cbezier(11.18,30.23)(11.22,23.09)(10.49,15.98)(9.00,9.00)
\thinlines\cbezier(30.00,40.13)(29.00,43.05)(25.55,44.29)(22.92,42.68)\cbezier(22.92,42.68)(20.52,41.21)(19.93,37.97)(21.65,35.74)
\thinlines\cbezier(21.37,35.46)(18.49,39.65)(17.76,44.94)(19.38,49.75)\cbezier(19.38,49.75)(21.07,54.74)(25.08,58.59)(30.14,60.08)
\thinlines\cbezier(30.14,60.08)(30.14,60.08)(30.14,39.71)(30.14,39.71)
\thinlines\lbezier(11.04,31.08)(11.18,29.10)
\thinlines\lbezier(11.18,29.10)(11.68,31.94)\lbezier(11.68,31.94)(10.28,31.84)\lbezier(10.28,31.84)(11.18,29.10)
\thinlines\lbezier(15.71,22.30)(16.84,20.75)
\thinlines\lbezier(16.84,20.75)(15.76,23.42)\lbezier(15.76,23.42)(14.63,22.60)\lbezier(14.63,22.60)(16.84,20.75)
\thinlines\lbezier(30.00,50.88)(30.00,48.62)
\thinlines\lbezier(30.00,48.62)(30.70,51.42)\lbezier(30.70,51.42)(29.30,51.42)\lbezier(29.30,51.42)(30.00,48.62)
\thinlines\lbezier(30.00,50.88)(30.00,48.62)
\thinlines\lbezier(30.00,48.62)(30.70,51.42)\lbezier(30.70,51.42)(29.30,51.42)\lbezier(29.30,51.42)(30.00,48.62)
\thinlines\cbezier(30.14,60.08)(30.14,60.08)(30.14,39.71)(30.14,39.71)
\thinlines\cbezier(21.37,35.46)(18.49,39.65)(17.76,44.94)(19.38,49.75)\cbezier(19.38,49.75)(21.07,54.74)(25.08,58.59)(30.14,60.08)
\thinlines\cbezier(30.00,40.13)(29.00,43.05)(25.55,44.29)(22.92,42.68)\cbezier(22.92,42.68)(20.52,41.21)(19.93,37.97)(21.65,35.74)
\thinlines\lbezier(11.04,31.08)(11.18,29.10)
\thinlines\lbezier(11.18,29.10)(11.68,31.94)\lbezier(11.68,31.94)(10.28,31.84)\lbezier(10.28,31.84)(11.18,29.10)
\thinlines\cbezier(8.77,51.45)(10.34,44.48)(11.14,37.37)(11.18,30.23)\cbezier(11.18,30.23)(11.22,23.09)(10.49,15.98)(9.00,9.00)
\thinlines\cbezier(8.77,51.31)(8.79,46.87)(7.87,42.49)(6.09,38.43)\cbezier(6.09,38.43)(4.74,35.38)(2.93,32.57)(0.71,30.09)
\thinlines\cbezier(0.43,30.09)(2.21,27.86)(3.73,25.44)(4.95,22.87)\cbezier(4.95,22.87)(7.08,18.39)(8.29,13.53)(8.49,8.58)
\thinlines\cbezier(20.94,25.42)(17.74,28.20)(13.49,23.54)(16.56,20.61)\cbezier(16.56,20.61)(19.40,17.89)(23.66,21.76)(21.22,24.85)
\thinlines\lbezier(15.71,22.30)(16.84,20.75)
\thinlines\lbezier(16.84,20.75)(15.76,23.42)\lbezier(15.76,23.42)(14.63,22.60)\lbezier(14.63,22.60)(16.84,20.75)
\thinlines\lbezier(30.00,9.01)(30.00,11.27)
\thinlines\lbezier(30.00,11.27)(29.30,8.47)\lbezier(29.30,8.47)(30.70,8.47)\lbezier(30.70,8.47)(30.00,11.27)
\thinlines\lbezier(30.00,9.01)(30.00,11.27)
\thinlines\lbezier(30.00,11.27)(29.30,8.47)\lbezier(29.30,8.47)(30.70,8.47)\lbezier(30.70,8.47)(30.00,11.27)
\thinlines\cbezier(29.86,-0.19)(29.86,-0.19)(29.86,20.18)(29.86,20.18)
\thinlines\cbezier(38.63,24.43)(41.51,20.24)(42.24,14.95)(40.62,10.14)\cbezier(40.62,10.14)(38.93,5.15)(34.92,1.30)(29.86,-0.19)
\thinlines\cbezier(30.00,19.76)(31.00,16.84)(34.45,15.60)(37.08,17.21)\cbezier(37.08,17.21)(39.48,18.68)(40.07,21.91)(38.35,24.14)
\thinlines\lbezier(48.96,28.81)(48.82,30.79)
\thinlines\lbezier(48.82,30.79)(48.32,27.95)\lbezier(48.32,27.95)(49.72,28.05)\lbezier(49.72,28.05)(48.82,30.79)
\thinlines\cbezier(51.23,8.44)(49.66,15.41)(48.86,22.52)(48.82,29.66)\cbezier(48.82,29.66)(48.78,36.79)(49.51,43.91)(51.00,50.89)
\thinlines\cbezier(51.23,8.58)(51.21,13.01)(52.13,17.40)(53.91,21.46)\cbezier(53.91,21.46)(55.26,24.50)(57.07,27.32)(59.29,29.80)
\thinlines\cbezier(59.57,29.80)(57.79,32.02)(56.27,34.45)(55.05,37.02)\cbezier(55.05,37.02)(52.92,41.49)(51.71,46.35)(51.51,51.31)
\thinlines\cbezier(39.06,34.47)(42.26,31.69)(46.51,36.35)(43.45,39.28)\cbezier(43.45,39.28)(40.60,42.00)(36.35,38.12)(38.78,35.04)
\thinlines\lbezier(44.29,37.58)(43.16,39.14)
\thinlines\lbezier(43.16,39.14)(44.24,36.46)\lbezier(44.24,36.46)(45.38,37.29)\lbezier(45.38,37.29)(43.16,39.14)
\put(30.14,63.19){\fontsize{7.97}{9.56}\selectfont \makebox[0pt]{$1$}}
\put(54.33,52.44){\fontsize{7.97}{9.56}\selectfont \makebox[0pt]{$2$}}
\put(62.96,29.10){\fontsize{7.97}{9.56}\selectfont \makebox[0pt]{$3$}}
\put(53.62,5.19){\fontsize{7.97}{9.56}\selectfont \makebox[0pt]{$4$}}
\put(30.14,-4.43){\fontsize{7.97}{9.56}\selectfont \makebox[0pt]{$-1$}}
\put(7.22,4.20){\fontsize{7.97}{9.56}\selectfont \makebox[0pt]{$-2$}}
\put(-3.53,28.67){\fontsize{7.97}{9.56}\selectfont \makebox[0pt]{$-3$}}
\put(5.38,52.58){\fontsize{7.97}{9.56}\selectfont \makebox[0pt]{$-4$}}
\put(29.85,35.32){\fontsize{7.97}{9.56}\selectfont \makebox[0pt]{$5$}}
\put(23.29,32.81){\fontsize{7.97}{9.56}\selectfont $6$}
\put(23.08,25.26){\fontsize{7.97}{9.56}\selectfont $7$}
\put(36.53,32.92){\fontsize{7.97}{9.56}\selectfont \makebox[0pt][r]{$-7$}}
\put(29.85,22.30){\fontsize{7.97}{9.56}\selectfont \makebox[0pt]{$-5$}}
\put(36.43,24.95){\fontsize{7.97}{9.56}\selectfont \makebox[0pt][r]{$-6$}}
\thicklines\rqbezier(110.01,60.23)(79.78,60.23)(79.78,30.00)(0.70711)\rqbezier(79.78,30.00)(79.78,-0.23)(110.01,-0.23)(0.70711)\rqbezier(110.01,-0.23)(140.25,-0.23)(140.25,30.00)(0.70711)\rqbezier(140.25,30.00)(140.25,60.23)(110.01,60.23)(0.70711)
\thicklines\rqbezier(110.01,40.00)(100.01,40.00)(100.01,30.00)(0.70711)\rqbezier(100.01,30.00)(100.01,20.00)(110.01,20.00)(0.70711)\rqbezier(110.01,20.00)(120.01,20.00)(120.01,30.00)(0.70711)\rqbezier(120.01,30.00)(120.01,40.00)(110.01,40.00)(0.70711)
\thinlines\rqbezier(110.01,61.00)(109.01,61.00)(109.01,60.00)(0.70711)\rqbezier(109.01,60.00)(109.01,59.00)(110.01,59.00)(0.70711)\rqbezier(110.01,59.00)(111.01,59.00)(111.01,60.00)(0.70711)\rqbezier(111.01,60.00)(111.01,61.00)(110.01,61.00)(0.70711)
\thinlines\rqbezier(110.01,1.00)(109.01,1.00)(109.01,0.00)(0.70711)\rqbezier(109.01,0.00)(109.01,-1.00)(110.01,-1.00)(0.70711)\rqbezier(110.01,-1.00)(111.01,-1.00)(111.01,0.00)(0.70711)\rqbezier(111.01,0.00)(111.01,1.00)(110.01,1.00)(0.70711)
\thinlines\rqbezier(140.01,31.00)(139.01,31.00)(139.01,30.00)(0.70711)\rqbezier(139.01,30.00)(139.01,29.00)(140.01,29.00)(0.70711)\rqbezier(140.01,29.00)(141.01,29.00)(141.01,30.00)(0.70711)\rqbezier(141.01,30.00)(141.01,31.00)(140.01,31.00)(0.70711)
\thinlines\rqbezier(80.01,31.00)(79.01,31.00)(79.01,30.00)(0.70711)\rqbezier(79.01,30.00)(79.01,29.00)(80.01,29.00)(0.70711)\rqbezier(80.01,29.00)(81.01,29.00)(81.01,30.00)(0.70711)\rqbezier(81.01,30.00)(81.01,31.00)(80.01,31.00)(0.70711)
\thinlines\rqbezier(88.79,52.31)(87.79,52.31)(87.79,51.31)(0.70711)\rqbezier(87.79,51.31)(87.79,50.31)(88.79,50.31)(0.70711)\rqbezier(88.79,50.31)(89.79,50.31)(89.79,51.31)(0.70711)\rqbezier(89.79,51.31)(89.79,52.31)(88.79,52.31)(0.70711)
\thinlines\rqbezier(131.30,52.14)(130.30,52.14)(130.30,51.14)(0.70711)\rqbezier(130.30,51.14)(130.30,50.14)(131.30,50.14)(0.70711)\rqbezier(131.30,50.14)(132.30,50.14)(132.30,51.14)(0.70711)\rqbezier(132.30,51.14)(132.30,52.14)(131.30,52.14)(0.70711)
\thinlines\rqbezier(131.30,9.43)(130.30,9.43)(130.30,8.43)(0.70711)\rqbezier(130.30,8.43)(130.30,7.43)(131.30,7.43)(0.70711)\rqbezier(131.30,7.43)(132.30,7.43)(132.30,8.43)(0.70711)\rqbezier(132.30,8.43)(132.30,9.43)(131.30,9.43)(0.70711)
\thinlines\rqbezier(88.79,9.58)(87.79,9.58)(87.79,8.58)(0.70711)\rqbezier(87.79,8.58)(87.79,7.58)(88.79,7.58)(0.70711)\rqbezier(88.79,7.58)(89.79,7.58)(89.79,8.58)(0.70711)\rqbezier(89.79,8.58)(89.79,9.58)(88.79,9.58)(0.70711)
\thinlines\rqbezier(110.01,41.00)(109.01,41.00)(109.01,40.00)(0.70711)\rqbezier(109.01,40.00)(109.01,39.00)(110.01,39.00)(0.70711)\rqbezier(110.01,39.00)(111.01,39.00)(111.01,40.00)(0.70711)\rqbezier(111.01,40.00)(111.01,41.00)(110.01,41.00)(0.70711)
\thinlines\rqbezier(110.01,21.00)(109.01,21.00)(109.01,20.00)(0.70711)\rqbezier(109.01,20.00)(109.01,19.00)(110.01,19.00)(0.70711)\rqbezier(110.01,19.00)(111.01,19.00)(111.01,20.00)(0.70711)\rqbezier(111.01,20.00)(111.01,21.00)(110.01,21.00)(0.70711)
\thinlines\rqbezier(118.07,36.60)(117.07,36.60)(117.07,35.60)(0.70711)\rqbezier(117.07,35.60)(117.07,34.60)(118.07,34.60)(0.70711)\rqbezier(118.07,34.60)(119.07,34.60)(119.07,35.60)(0.70711)\rqbezier(119.07,35.60)(119.07,36.60)(118.07,36.60)(0.70711)
\thinlines\rqbezier(118.36,25.85)(117.36,25.85)(117.36,24.85)(0.70711)\rqbezier(117.36,24.85)(117.36,23.85)(118.36,23.85)(0.70711)\rqbezier(118.36,23.85)(119.36,23.85)(119.36,24.85)(0.70711)\rqbezier(119.36,24.85)(119.36,25.85)(118.36,25.85)(0.70711)
\thinlines\rqbezier(101.38,25.85)(100.38,25.85)(100.38,24.85)(0.70711)\rqbezier(100.38,24.85)(100.38,23.85)(101.38,23.85)(0.70711)\rqbezier(101.38,23.85)(102.38,23.85)(102.38,24.85)(0.70711)\rqbezier(102.38,24.85)(102.38,25.85)(101.38,25.85)(0.70711)
\thinlines\rqbezier(101.52,36.60)(100.52,36.60)(100.52,35.60)(0.70711)\rqbezier(100.52,35.60)(100.52,34.60)(101.52,34.60)(0.70711)\rqbezier(101.52,34.60)(102.52,34.60)(102.52,35.60)(0.70711)\rqbezier(102.52,35.60)(102.52,36.60)(101.52,36.60)(0.70711)
\put(110.15,63.19){\fontsize{7.97}{9.56}\selectfont \makebox[0pt]{$1$}}
\put(134.34,52.44){\fontsize{7.97}{9.56}\selectfont \makebox[0pt]{$2$}}
\put(142.97,29.10){\fontsize{7.97}{9.56}\selectfont \makebox[0pt]{$3$}}
\put(133.63,5.19){\fontsize{7.97}{9.56}\selectfont \makebox[0pt]{$4$}}
\put(110.15,-4.43){\fontsize{7.97}{9.56}\selectfont \makebox[0pt]{$-1$}}
\put(87.23,4.20){\fontsize{7.97}{9.56}\selectfont \makebox[0pt]{$-2$}}
\put(76.48,28.67){\fontsize{7.97}{9.56}\selectfont \makebox[0pt]{$-3$}}
\put(85.39,52.58){\fontsize{7.97}{9.56}\selectfont \makebox[0pt]{$-4$}}
\put(109.87,35.32){\fontsize{7.97}{9.56}\selectfont \makebox[0pt]{$5$}}
\put(103.30,32.81){\fontsize{7.97}{9.56}\selectfont $6$}
\put(103.10,25.26){\fontsize{7.97}{9.56}\selectfont $7$}
\put(116.54,32.92){\fontsize{7.97}{9.56}\selectfont \makebox[0pt][r]{$-7$}}
\put(109.87,22.30){\fontsize{7.97}{9.56}\selectfont \makebox[0pt]{$-5$}}
\put(116.44,24.95){\fontsize{7.97}{9.56}\selectfont \makebox[0pt][r]{$-6$}}
\thinlines\cbezier(110.06,59.91)(110.06,59.91)(110.06,40.01)(110.06,40.01)
\thinlines\cbezier(110.06,40.15)(110.47,43.00)(113.28,44.86)(116.06,44.10)\cbezier(116.06,44.10)(119.81,43.09)(121.10,38.42)(118.40,35.62)
\thinlines\cbezier(118.40,35.62)(118.40,35.62)(131.28,51.13)(131.28,51.13)
\thinlines\cbezier(131.28,50.98)(131.76,40.78)(129.85,30.61)(125.71,21.28)\cbezier(125.71,21.28)(122.13,13.20)(116.96,5.94)(110.50,-0.08)
\thinlines\cbezier(139.91,30.06)(134.95,29.81)(130.57,26.74)(128.64,22.16)\cbezier(128.64,22.16)(126.77,17.69)(127.56,12.54)(130.69,8.84)
\thinlines\lbezier(133.84,21.37)(132.65,18.42)
\thinlines\lbezier(132.65,18.42)(134.35,20.76)\lbezier(134.35,20.76)(133.05,21.28)\lbezier(133.05,21.28)(132.65,18.42)
\thinlines\cbezier(131.28,8.69)(131.10,12.76)(131.85,16.81)(133.47,20.55)\cbezier(133.47,20.55)(134.99,24.05)(137.23,27.19)(140.05,29.76)
\thinlines\lbezier(123.34,23.82)(122.36,21.44)
\thinlines\lbezier(122.36,21.44)(124.07,23.76)\lbezier(124.07,23.76)(122.78,24.29)\lbezier(122.78,24.29)(122.36,21.44)
\thinlines\cbezier(118.55,24.79)(116.82,22.10)(120.66,19.36)(122.64,21.86)\cbezier(122.64,21.86)(124.81,24.60)(120.69,27.83)(118.55,25.08)
\thinlines\lbezier(109.91,50.83)(110.06,49.23)
\thinlines\lbezier(109.91,51.71)(110.06,48.20)
\thinlines\lbezier(110.06,48.20)(110.64,51.03)\lbezier(110.64,51.03)(109.24,50.97)\lbezier(109.24,50.97)(110.06,48.20)
\thinlines\cbezier(110.15,0.02)(110.15,0.02)(110.15,19.92)(110.15,19.92)
\thinlines\cbezier(110.15,19.77)(109.75,16.92)(106.93,15.07)(104.15,15.82)\cbezier(104.15,15.82)(100.40,16.84)(99.11,21.51)(101.81,24.31)
\thinlines\cbezier(101.81,24.31)(101.81,24.31)(88.94,8.80)(88.94,8.80)
\thinlines\cbezier(88.94,8.94)(88.45,19.14)(90.36,29.31)(94.50,38.65)\cbezier(94.50,38.65)(98.08,46.72)(103.25,53.99)(109.71,60.01)
\thinlines\cbezier(80.30,29.87)(85.27,30.12)(89.65,33.19)(91.57,37.77)\cbezier(91.57,37.77)(93.45,42.24)(92.66,47.39)(89.52,51.08)
\thinlines\lbezier(86.37,38.56)(87.56,41.50)
\thinlines\lbezier(87.56,41.50)(85.87,39.17)\lbezier(85.87,39.17)(87.16,38.65)\lbezier(87.16,38.65)(87.56,41.50)
\thinlines\cbezier(88.94,51.23)(89.11,47.17)(88.36,43.11)(86.74,39.38)\cbezier(86.74,39.38)(85.23,35.87)(82.98,32.73)(80.16,30.16)
\thinlines\lbezier(96.87,36.11)(97.86,38.49)
\thinlines\lbezier(97.86,38.49)(96.14,36.17)\lbezier(96.14,36.17)(97.44,35.63)\lbezier(97.44,35.63)(97.86,38.49)
\thinlines\cbezier(101.67,35.14)(103.40,37.83)(99.56,40.57)(97.57,38.06)\cbezier(97.57,38.06)(95.41,35.33)(99.52,32.09)(101.67,34.84)
\put(0.00,61.00){\fontsize{9.96}{11.95}\selectfont \makebox[0pt]{$\pi$}}
\put(80.00,61.00){\fontsize{9.96}{11.95}\selectfont \makebox[0pt]{$\sigma$}}
\end{picture}%
\fi
\caption{$\pi=((1~5~6))((2~3~4))$ and $\sigma=[1~5~{-7}~2]((3~4))$ in $\sb(4,3)$}
\label{fig:annulus}
\end{figure}

A paired nonzero cycle $((a_1~a_2 \dots a_k))$ is called \emph{connected} if the
set $\{a_1,\dots,a_k\}$ intersects with both $\set{\pm 1, \dots, \pm p}$ and
$\set{\pm (p+1), \dots, \pm (p+q)}$, and \emph{disconnected} otherwise. A zero
cycle is always considered to be disconnected.  For $\sigma \in \sb(p,q)$, the
\emph{connectivity} of $\sigma$ is the number of connected paired nonzero cycles
of $\sigma$.

We say that a maximal chain $C = \{\epsilon =\pi_0 < \pi_1< \cdots < \pi_{p+q}
=\gamma_{p,q}\}$ of $\sb(p,q)$ is {\em disconnected} if the connectivity of each
$\pi_i$ is zero. Otherwise, $C$ is called {\em connected}.  Denote by
$\CM(\sb(p,q))$ the set of connected maximal chains of $\sb(p,q)$.

For a maximal chain $C = \{\pi_0 < \pi_1< \cdots < \pi_{n} \}$ of the interval
$[\pi_0,\pi_n]$, we define $\varphi(C) = (\tau_{1},\tau_{2},\dots,\tau_{n})$,
where $\tau_i=\pi_{i}^{-1}\pi_{i+1}$. Note that each $\tau_i$ is a type $B$
transposition and $\pi_i=\tau_1\tau_2\cdots\tau_i$ for all $i=1,2,\dots,n$.

\begin{lem}\label{lem:linked}
  If $C$ is a connected maximal chain of $\sb(p,q)$, then $\varphi(C)$ has no
  transpositions of the form $\epsilon_i=[i]$ and has at least one connected
  transposition.  If $C$ is a disconnected maximal chain of $\sb(p,q)$, then
  $\varphi(C)$ has only disconnected transpositions.
\end{lem}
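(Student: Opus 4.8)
The plan is to derive everything from the covering relations of $\sb(p,q)$, tracked against two statistics along the chain: the number of paired nonzero cycles and the connectivity. First I would record a classification of covering relations. If $\pi\lessdot\pi'$ in $\sb(p,q)$, then by the rank formula \eqref{eq:rankB} the type $B$ transposition $\tau=\pi^{-1}\pi'$ drops the number of paired nonzero cycles of $\pi$ by exactly one. Inspecting how right multiplication by $\epsilon_j=[j]$ or by $((a~b))$ changes the cycle decomposition of $\pi$ as a permutation of $\{\pm1,\dots,\pm(p+q)\}$, and discarding every case in which the paired-nonzero count does not drop, exactly the following four cases survive:
\begin{enumerate}[(I)]
\item $\tau=((a~b))$ with $a,b$ in distinct paired nonzero cycles $P_1,P_2$ of $\pi$: they merge into one paired nonzero cycle with support $\operatorname{supp}(P_1)\cup\operatorname{supp}(P_2)$;
\item $\tau=((a~b))$ with $a,b$ in opposite halves of one paired nonzero cycle $P$ (the cycle $(c_1\dots c_k)$ and its mirror $(-c_1\dots -c_k)$): then $P$ is replaced by two zero cycles whose supports partition $\operatorname{supp}(P)$ (the outcome cannot be a single paired nonzero cycle, as the count must drop);
\item $\tau=((a~b))$ with $a$ in a paired nonzero cycle $P_1$ and $b$ in a zero cycle $Z$: then $P_1,Z$ are replaced by one zero cycle with support $\operatorname{supp}(P_1)\cup\operatorname{supp}(Z)$;
\item $\tau=\epsilon_j$ with $j$ in a paired nonzero cycle $P$: then $P$ is replaced by one zero cycle with support $\operatorname{supp}(P)$.
\end{enumerate}
Two consequences are immediate. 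Along any maximal chain a zero cycle is never destroyed and its support never shrinks: zero cycles appear only in steps of type II or IV, and are only enlarged afterward, in steps of type III. And since $\epsilon$ has no zero cycle, $\gamma_{p,q}$ has two, and steps I--IV change the number of zero cycles by $0,+2,0,+1$ respectively, every maximal chain uses either one step of type II and no step of type IV, or two steps of type IV and no step of type II.

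Next I would track the connectivity $k(\pi_i)$. Steps of type II, III, IV never raise $k$; a step of type I raises $k$ precisely when it merges two disconnected paired nonzero cycles whose supports lie on opposite sides (one inside $\{\pm1,\dots,\pm p\}$, one inside $\{\pm(p+1),\dots,\pm(p+q)\}$), and for such a step $\tau_i=((a~b))$ has $a,b$ on opposite sides, i.e.\ $\tau_i$ is a connected transposition. Since $k(\pi_0)=k(\pi_{p+q})=0$ and $|k(\pi_i)-k(\pi_{i-1})|\le1$, a connected maximal chain (which has $k(\pi_m)\ge1$ for some $m$) has a step on which $k$ increases; hence $\varphi(C)$ contains at least one connected transposition.

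To rule out transpositions $\epsilon_j$ in a connected chain, suppose the chain uses two steps of type IV, which by the above is the only alternative to using none, so it uses no step of type II. A paired nonzero cycle with mixed support can only be created in a step of type I, and a further step of type I merges it only into a still-mixed paired nonzero cycle, so it survives until destroyed; but with type II unavailable it can be destroyed only by a step of type III or IV, each of which then produces a zero cycle with mixed support, which by the first consequence survives all the way to $\gamma_{p,q}$ — impossible, since the two zero cycles of $\gamma_{p,q}=[1\dots p][p+1\dots p+q]$ have supports $\{\pm1,\dots,\pm p\}$ and $\{\pm(p+1),\dots,\pm(p+q)\}$. Hence $k\equiv0$ along this chain and it is disconnected; so a connected chain uses no step of type IV, i.e.\ $\varphi(C)$ has no transposition $\epsilon_j$.

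Finally, suppose $C$ is disconnected, so $k(\pi_i)=0$ for all $i$, and suppose some $\tau_i$ were connected, say $\tau_i=((a~b))$ with $a,b$ on opposite sides; then step $i$ is of type I, II, or III. Type I would merge the two (necessarily disconnected) cycles containing $a$ and $b$, which sit on opposite sides, into a connected paired nonzero cycle of $\pi_i$, forcing $k(\pi_i)\ge1$; type II would place $a,b$ in one paired nonzero cycle of $\pi_{i-1}$, which is then connected, forcing $k(\pi_{i-1})\ge1$; type III would create a zero cycle of $\pi_i$ containing $a$ and $b$, a mixed-support zero cycle surviving to $\gamma_{p,q}$ — each impossible. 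So $\varphi(C)$ has only disconnected transpositions. The main obstacle is the covering-relation classification: it is a careful but elementary analysis of cycle-structure changes under multiplication by a type $B$ transposition, with \eqref{eq:rankB} doing the work of killing the irrelevant cases and pinning down the outcomes; everything afterward is bookkeeping with $k$, the paired-nonzero count, and the principle that zero cycles only grow.
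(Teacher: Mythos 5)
Your proof is correct and follows essentially the same route as the paper's: the four cover types (I)--(IV) are exactly the classification (b), (d), (c), (a) that the paper cites from Nica--Oancea, and the subsequent bookkeeping with the zero-cycle count (forcing either one type-II step or two type-IV steps) and the ``cycles only merge, zero cycles only grow'' principle is the same argument. The only difference is that you re-derive the cover classification and spell out a few steps the paper leaves terse (e.g., why a connected chain must contain a connected transposition).
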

\begin{proof}
  By \eqref{eq:rankB}, $\sigma$ covers $\pi$ in $\sb(p,q)$ if and only if one of
  the following conditions holds, see \cite[Proposition 2.2]{NO09}:
  \begin{enumerate}[(a)]
  \item \label{type:a} $\pi^{-1}\sigma=\epsilon_i$ and the cycle containing $i$
    in $\pi$ is nonzero, i.e., $\pi$ has $((i \cdots))$ and $\sigma$ has $[i
    \cdots]$.
  \item \label{type:b} $\pi^{-1} \sigma=((i~j))$ and no two of $i$, $-i$, $j$,
    $-j$ belong to the same cycle in $\pi$ with $\abs{i} \neq \abs{j}$, i.e.,
    $\pi$ has $((i \cdots))((j \cdots))$ and $\sigma$ has $((i \cdots j
    \cdots))$.
  \item \label{type:c} $\pi^{-1} \sigma=((i~j))$ and the cycle containing $i$ in
    $\pi$ is nonzero and the cycle containing $j$ in $\pi$ is zero with $\abs{i}
    \neq \abs{j}$, i.e., $\pi$ has $((i \cdots))[j \cdots]$ and $\sigma$ has $[i
    \cdots -j \cdots]$.
  \item \label{type:d} $\pi^{-1} \sigma=((i~j))$ and $i$ and $-j$ belong to the
    same nonzero cycle in $\pi$ with $\abs{i} \neq \abs{j}$, i.e., $\pi$ has
    $((i \cdots {-j} \cdots))$ and $\sigma$ has $[i \cdots][{-j} \cdots]$.
\end{enumerate}
If $\sigma$ covers $\pi$ in $\sb(p,q)$, we have $\zc(\sigma)\ge\zc(\pi)$, where
$\zc(\sigma)$ is the the number of zero cycles in $\sigma$.  More precisely we
have
\[
\zc(\sigma)-\zc(\pi)=
\begin{cases}
0 & \text{if type \eqref{type:b} or \eqref{type:c},} \\
1 & \text{if type \eqref{type:a},}\\
2 & \text{if type \eqref{type:d}.}
\end{cases}
\]
Since $\gamma_{p,q}$ has two zero cycles, each $\pi \in \sb(p,q)$ has at most
two zero cycles. Moreover, if $\pi$ has two zero cycles, then one of them
belongs to $\set{\pm, 1, \dots, {\pm p}}$ and the other belongs to
$\set{\pm{(p+1)}, \dots, \pm{(p+q)}}$.  Consider a maximal chain $C$ in
$\sb(p,q)$.
\begin{itemize}
\item If $C$ has a permutation $\pi$ with $\zc(\pi)=1$, there are two cover
  relations of type \eqref{type:a} and no cover relations of type \eqref{type:d}
  in $C$. For each cover relation $\pi < \sigma$ of type \eqref{type:a},
  \eqref{type:b}, or \eqref{type:c}, $\sigma$ is obtained by merging cycles in
  $\pi$. Since $\gamma_{p,q}$ has only disconnected cycles, all permutations in
  $C$ are disconnected, which implies that $C$ is disconnected.
\item Otherwise, there is a cover relation $\pi < \sigma$ of type \eqref{type:d}
  in $C$. Then $\sigma$ has two zero cycles $[i \cdots]$ and $[{-j} \cdots]$,
  one of which is contained in $\set{\pm, 1, \dots, {\pm p}}$ and the other is
  contained in $\set{\pm{(p+1)}, \dots, \pm{(p+q)}}$. Thus $\pi$ has a connected
  nonzero cycle $((i \cdots {-j} \cdots))$, and $C$ is connected. Since $C$ has
  no cover relations of type \eqref{type:a}, $\varphi(C)$ has no transposition
  of the form $\epsilon_i$.
\end{itemize}

Therefore, if $C$ is a disconnected maximal chain of $\sb(p,q)$, then
$\varphi(C)$ has two transpositions of the form $\epsilon_i$. So all
transpositions of $\varphi(C)$ are disconnected.  Also, if $C$ is a connected
maximal chain of $\sb(p,q)$, then $\varphi(C)$ has no transposition of the form
$\epsilon_i$ and has at least one connected transposition.
\end{proof}

The following proposition is a refinement of \eqref{eq:nc-formula}.

\begin{prop} \label{prop:connected}
The number of disconnected maximal chains of $\sb(p,q)$ is equal to
\begin{align}
{p+q \choose q} p^p q^q\label{eq:dmc}
\end{align}
and the number of connected maximal chains of $\sb(p,q)$ is equal to
\begin{align}
\sum_{c \ge 1} 2c {p+q \choose p-c} p^{p-c} q^{q+c}.\label{eq:cmc}
\end{align}
\end{prop}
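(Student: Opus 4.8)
The plan is to get \eqref{eq:cmc} from \eqref{eq:dmc} and the known total count, and to establish \eqref{eq:dmc} directly by a bijection. Since every maximal chain of $\sb(p,q)$ is, by definition, either connected or disconnected and not both, the number of connected maximal chains is the total number of maximal chains minus \eqref{eq:dmc}. The total is the sum in \eqref{eq:nc-formula} (using $\sb(p,q)\cong NC^{(B)}(p,q)$ and the result of \cite{GNO11}), and that sum is literally $\eqref{eq:dmc}+\eqref{eq:cmc}$ term by term, so it suffices to prove \eqref{eq:dmc}.

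For \eqref{eq:dmc} I would show a disconnected maximal chain is really a shuffle of independent data living on the two \emph{sides} $P:=\set{\pm 1,\dots,\pm p}$ and $Q:=\set{\pm (p+1),\dots,\pm(p+q)}$. Write $NC^{(B)}(n)$ for the interval $[\epsilon,[1~2~\cdots~n]]$ in $B_n$; it is classical that $NC^{(B)}(n)$ has exactly $n^n$ maximal chains (the $W=B_n$ instance of the formula $n!\,h^n/\abs{W}$ for the number of maximal chains in the noncrossing partition lattice of a rank-$n$ reflection group of Coxeter number $h$, here $h=2n$ and $\abs{W}=2^n n!$). The structural claim to prove is: along a disconnected maximal chain $C=\set{\epsilon=\pi_0<\pi_1<\cdots<\pi_{p+q}=\gamma_{p,q}}$, each $\pi_i$ factors as $\pi_i=\pi_i^P\pi_i^Q$ with $\pi_i^P$ supported on $P$ and $\pi_i^Q$ on $Q$, and each step $\pi_{i-1}<\pi_i$ alters exactly one factor, which moves up by a cover in the corresponding $NC^{(B)}$. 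I would prove this by induction on $i$: by Lemma~\ref{lem:linked} each $\tau_i=\pi_{i-1}^{-1}\pi_i$ is a disconnected type $B$ transposition, hence of type \eqref{type:a}, \eqref{type:b} or \eqref{type:c} in the case analysis of that lemma's proof, and in each of these cases $\tau_i$ is supported on a single side (for \eqref{type:b} and \eqref{type:c} because a disconnected $((k~l))$ forces $\abs k$ and $\abs l$ onto the same side, and then by the inductive hypothesis the cycles of $\pi_{i-1}$ being modified lie on that side), so $\pi_i=\pi_{i-1}^P\pi_{i-1}^Q\tau_i$ changes only one factor. Additivity of absolute length over disjoint supports combined with $\pi_i\le\gamma_{p,q}=[1~\cdots~p][p+1~\cdots~p+q]$ forces $\pi_i^P\le[1~\cdots~p]$ and $\pi_i^Q\le[p+1~\cdots~p+q]$, and additivity of the rank function \eqref{eq:rankB} shows that the altered factor goes up by exactly one rank, i.e.\ by a cover.

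Granting the structural claim, the map $C\mapsto(w,C^P,C^Q)$ --- where $w\in\set{P,Q}^{p+q}$ records which side moves at step $i$, and $C^P$, $C^Q$ are the chains traced by the two factors --- is a bijection from disconnected maximal chains of $\sb(p,q)$ onto triples consisting of a maximal chain $C^P$ of $NC^{(B)}(p)$, a maximal chain $C^Q$ of $NC^{(B)}(q)$, and a word $w$ with $p$ letters $P$ and $q$ letters $Q$ (the multiplicities are forced since $C^P$ has length $p$ and $C^Q$ has length $q$). The inverse just interleaves $C^P$ and $C^Q$ according to $w$; one checks, again using rank additivity, that the result is a maximal chain of $\sb(p,q)$, and it is disconnected because each of its terms factors and hence has connectivity $0$. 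Counting the triples gives $\binom{p+q}{p}\,p^p q^q=\binom{p+q}{q}p^p q^q$, which is \eqref{eq:dmc}, and then \eqref{eq:cmc} follows by subtraction from \eqref{eq:nc-formula}.

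The hard part is the structural claim: one must check that along a disconnected maximal chain the two sides truly never interact --- no cover merges a $P$-cycle with a $Q$-cycle, and no cycle ever turns connected --- which is exactly where Lemma~\ref{lem:linked} and the classification \eqref{type:a}--\eqref{type:d} of covers from \cite{NO09} are used. In particular, the bare conclusion of Lemma~\ref{lem:linked} that $\varphi(C)$ has only disconnected transpositions is not enough on its own; the induction that propagates the product decomposition $\pi_i=\pi_i^P\pi_i^Q$ down the chain cannot be skipped. A minor loose end to nail down is the input value $n^n$ for the number of maximal chains of $NC^{(B)}(n)$.
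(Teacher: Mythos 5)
Your proposal is correct and follows essentially the same route as the paper: establish \eqref{eq:dmc} via a shuffle bijection between disconnected maximal chains of $\sb(p,q)$ and pairs of maximal chains of $NC^{(B)}(p)$ and $NC^{(B)}(q)$ (using Reiner's count $n^n$), then obtain \eqref{eq:cmc} by subtracting from the total \eqref{eq:nc-formula}. The only difference is that you spell out, by induction using the cover classification from the proof of Lemma~\ref{lem:linked}, the factorization $\pi_i=\pi_i^P\pi_i^Q$ that the paper dismisses with ``it is easy to see.''
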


\begin{proof}
  Let $NC^{(B)}(n)$ denote the poset of \emph{noncrossing partitions of type $B$}
  of size $n$.  It is well-known \cite[Proposition 7]{Rei97} that the number of
  maximal chains of $NC^{(B)}(n)$ equals $n^n$.  Let $\gamma_p := [1~2~\dots~p]$
  and $\gamma_q := [p+1~p+2~\dots~p+q]$.  Since $NC^{(B)}(p) \simeq [\epsilon,
  \gamma_{p}]$ and $NC^{(B)}(q) \simeq [\epsilon, \gamma_{q}]$, the numbers of
  maximal chains of $[\epsilon, \gamma_{p}]$ and $[\epsilon, \gamma_{q}]$ are
  respectively $p^p$ and $q^q$.  Given two maximal chains $C_1$ of $[\epsilon,
  \gamma_{p}]$ and $C_2$ of $[\epsilon, \gamma_{q}]$, we can obtain a
  $(p+q)$-tuple $(\tau_1,\dots,\tau_{p+q})$ of transpositions by shuffling
  $\varphi(C_1)$ and $\varphi(C_2)$ in ${p+q \choose q}$ ways. Then
  $C=\{\pi_0<\pi_1<\dots<\pi_{p+q}\}$, where $\pi_i=\tau_1\cdots\tau_i$, is a
  disconnected maximal chain of $\sb(p,q)$. By Lemma~\ref{lem:linked}, it is
  easy to see that every disconnected maximal chain of $\sb(p,q)$ can be
  obtained in this way. Thus we get \eqref{eq:dmc}.  By \eqref{eq:nc-formula}
  and \eqref{eq:dmc}, we obtain \eqref{eq:cmc}.
\end{proof}

\begin{rmk}
While one can also deduce Proposition~\ref{prop:connected} using the proof
of Theorem 5.3 in \cite{GNO11}, our proof gives a direct combinatorial
interpretation of \eqref{eq:dmc}.
\end{rmk}

We now prove the following identity that appears in the introduction.  The proof
is due to Krattenthaler \cite{Kra}.

\begin{lem}\label{lem:kk}
We have
\begin{align}
\sum_{c \ge 1} c {p+q \choose p-c} p^{p-c} q^{q+c}
=
\frac{pq}{p+q} {p+q \choose q} p^p q^q. \label{eq:kk-formula}
\end{align}
\end{lem}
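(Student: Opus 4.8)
The plan is to prove the identity by a standard application of the Lagrange inversion formula (or equivalently, Abel-type binomial summation), treating both sides as coefficients extracted from a suitable power series. First I would rewrite the left-hand side in a form amenable to Lagrange inversion. Recall the Abel identity
\[
\sum_{k=0}^{m} \binom{m}{k} (x+k)^{k-1} (y+m-k)^{m-k} = \frac{(x+y+m)^m}{x},
\]
and its companion obtained by differentiation or by the substitution of generating functions. Setting $m = p+q$ and matching $(x+k)^{k-1}$ to a shifted power of $p$ and $(y+m-k)^{m-k}$ to a shifted power of $q$, with the summation index $k$ related to $p-c$, one should be able to express $\sum_{c} c \binom{p+q}{p-c} p^{p-c} q^{q+c}$ in closed form. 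The factor $c = p - (p-c)$ inside the sum is the source of the extra $\frac{pq}{p+q}$ factor: writing $c = p - k$ where $k = p-c$ splits the sum into a difference of two Abel-type sums, one weighted by $p$ and one weighted by $k$, the latter of which is handled by the derivative form of the Abel identity.

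Concretely, the key steps in order would be: (i) substitute $k = p - c$ so that $c$ ranges appropriately and the summand becomes $(p-k)\binom{p+q}{k} p^{k} q^{(p+q)-k}$, being careful about which values of $k$ actually occur (i.e., $k \le p-1$, equivalently $c \ge 1$); (ii) split $(p-k)$ and recognize each of the two resulting sums as an instance of an Abel/Lagrange identity for the power series $T = T(z)$ satisfying $T = z e^{T}$, whose coefficients encode $\frac{k^k}{k!}$; (iii) extract the relevant coefficient of $z^{p+q}$ and simplify, noting that the "missing" top term $k = p$ (i.e. $c = 0$) contributes zero to the weighted sum anyway because of the $(p-k)$ factor, so one may freely extend the summation range to all $k$ from $0$ to $p+q$ when that is convenient for applying the closed-form identity; (iv) collect the result and compare with $\frac{pq}{p+q}\binom{p+q}{q} p^p q^q$.

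The main obstacle, as usual with these exponential-generating-function manipulations, is bookkeeping: getting the shifts in the exponents exactly right (the $p^{p-c} q^{q+c}$ pattern is asymmetric, and the binomial is $\binom{p+q}{p-c}$ rather than $\binom{p+q}{c}$), and correctly justifying that the terms outside the nominal range $c \ge 1$ vanish or can be added back. There is also a small subtlety in that the identity is not symmetric in $p$ and $q$ on the left but the right-hand side $\frac{pq}{p+q}\binom{p+q}{q} p^p q^q$ is symmetric; this asymmetry is genuine and is resolved only after the sum is evaluated, so one must resist the temptation to symmetrize prematurely. Once the correct Abel identity is pinned down, the remaining computation is a routine simplification of factorials and powers. (As the excerpt notes, this argument is due to Krattenthaler, so I expect the intended proof to be exactly this Abel/Lagrange-inversion route, perhaps phrased directly in terms of the tree function rather than via the Abel identity.)
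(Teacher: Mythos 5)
Your plan contains a step that fails. After the substitution $k=p-c$ the summand is $(p-k)\binom{p+q}{k}p^{k}q^{(p+q)-k}$: the bases $p$ and $q$ are \emph{fixed} and only the exponents move with $k$, so this is a truncated, linearly weighted binomial expansion, not an Abel-type sum. In the Abel identity you quote, the bases $(x+k)$ and $(y+m-k)$ shift with the summation index, and no choice of $x,y$ matches $p^{k}q^{(p+q)-k}$; the proposed reduction to Lagrange inversion for $T=ze^{T}$ therefore does not get off the ground. More seriously, the claim that one may ``freely extend the summation range to all $k$ from $0$ to $p+q$'' is false: only the single term $k=p$ (i.e.\ $c=0$) is killed by the factor $p-k$, while the terms $k=p+1,\dots,p+q$ are nonzero and negative. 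Indeed the full sum vanishes identically,
\[
\sum_{k=0}^{p+q}(p-k)\binom{p+q}{k}p^{k}q^{(p+q)-k}
= p(p+q)^{p+q}-(p+q)\,p\,(p+q)^{p+q-1}=0,
\]
whereas for $p=2$, $q=1$ the truncated sum over $0\le k\le p$ equals $8$, the common value of both sides of \eqref{eq:kk-formula}. So the truncation is essential and cannot be argued away; any generating-function proof must treat the partial sum honestly, and your sketch has no mechanism for doing so.

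The paper's proof (due to Krattenthaler) is far more elementary and handles the truncation automatically. One writes $c = p\cdot\frac{q+c}{p+q}-q\cdot\frac{p-c}{p+q}$ and absorbs the fractions into the binomial coefficient via $\frac{q+c}{p+q}\binom{p+q}{p-c}=\binom{p+q-1}{p-c}$ and $\frac{p-c}{p+q}\binom{p+q}{p-c}=\binom{p+q-1}{p-c-1}$. The summand becomes $A_c-A_{c+1}$ with $A_c=\binom{p+q-1}{p-c}p^{p-c+1}q^{q+c}$, and the sum over $0\le c\le p$ telescopes to $A_0-A_{p+1}=\binom{p+q-1}{p}p^{p+1}q^{q}=\frac{pq}{p+q}\binom{p+q}{q}p^{p}q^{q}$, the boundary term $A_{p+1}$ vanishing because $\binom{p+q-1}{-1}=0$. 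If you wish to salvage your route, the identity you need is precisely this telescoping evaluation of the weighted partial binomial sum, at which point the Abel/Lagrange machinery is superfluous.
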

\begin{proof}
Since $c = p\cdot \frac{q+c}{p+q} - q \cdot \frac{p-c}{p+q}$, we have
\begin{align*}
\sum_{c=0}^p c {p+q \choose p-c} p^{p-c} q^{q+c}
&= \sum_{c=0}^p  \left( p\cdot \frac{q+c}{p+q} - q \cdot \frac{p-c}{p+q} \right) {p+q \choose p-c} p^{p-c} q^{q+c}\\
&= \sum_{c=0}^p \left( {p+q-1 \choose p-c} p^{p-c+1} q^{q+c} - {p+q-1 \choose p-c-1} p^{p-c} q^{q+c+1}\right)\\
&= {p+q-1 \choose p} p^{p+1} q^{q} = \frac{pq}{p+q}{p+q \choose p} p^p q^q.
\end{align*}
\end{proof}

By Proposition~\ref{prop:connected} and Lemma~\ref{lem:kk}, we get the following.
\begin{cor} \label{cor:cmc}
The number of connected maximal chains of $\sb(p,q)$ is equal to
\begin{align}
\frac{2pq}{p+q} {p+q \choose q} p^p q^q. \label{eq:cmc-kk}
\end{align}
\end{cor}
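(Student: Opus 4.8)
The plan is to deduce Corollary~\ref{cor:cmc} directly from Proposition~\ref{prop:connected} together with Lemma~\ref{lem:kk}, so the argument is purely a matter of assembling facts already established. First I would invoke Proposition~\ref{prop:connected}, which states that the number of connected maximal chains of $\sb(p,q)$ equals $\sum_{c\ge 1} 2c {p+q \choose p-c} p^{p-c} q^{q+c}$. Factoring the constant $2$ out of the sum rewrites this as $2\sum_{c\ge 1} c {p+q \choose p-c} p^{p-c} q^{q+c}$.

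Next I would apply Lemma~\ref{lem:kk}, which evaluates $\sum_{c\ge 1} c {p+q \choose p-c} p^{p-c} q^{q+c}$ as $\frac{pq}{p+q} {p+q \choose q} p^p q^q$; note that the $c=0$ term of that sum vanishes, so whether one sums over $c\ge 1$, over $c\ge 0$, or over $0\le c\le p$ makes no difference, and the statement of the lemma applies verbatim. Substituting this value in gives $2\cdot\frac{pq}{p+q} {p+q \choose q} p^p q^q = \frac{2pq}{p+q} {p+q \choose q} p^p q^q$, which is precisely \eqref{eq:cmc-kk}. The only fussy point to record is the symmetry ${p+q \choose p} = {p+q \choose q}$, which is what lets the form appearing at the end of the proof of Lemma~\ref{lem:kk} match the form in the corollary's statement.

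Since every ingredient is already in hand, there is no genuine obstacle here: the corollary is a two-line computation. If one instead sought to prove it from scratch, the real work would lie in Proposition~\ref{prop:connected} — which itself rests on the Goulden--Nica--Oancea count \eqref{eq:nc-formula} and on the bijective enumeration \eqref{eq:dmc} of disconnected maximal chains via shuffles of maximal chains in $NC^{(B)}(p)$ and $NC^{(B)}(q)$ — and in the telescoping identity of Lemma~\ref{lem:kk}; but both are available to us, so the corollary follows immediately.
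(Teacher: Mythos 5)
Your proposal is correct and matches the paper's own argument, which derives the corollary immediately by combining Proposition~\ref{prop:connected} with Lemma~\ref{lem:kk}. The extra bookkeeping you record (factoring out the $2$, the vanishing $c=0$ term, and the symmetry ${p+q\choose p}={p+q\choose q}$) is all routine and consistent with the paper.
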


For example, Figure~\ref{fig:connected} illustrates
$16=\frac{4}{3}\,\binom{3}{1}\,2^2$ connected maximal chains of $\sb(2,1)$.

By Corollary~\ref{cor:cmc}, in order to prove \eqref{eq:pq-formula}
combinatorially it is sufficient to find a 2-1 map from $\CM(\sb(p,q))$
to $\F_{(p,q)}$. We will find such a map in the next section.

\begin{figure}[t]
$$
\xy
\small
\xymatrixcolsep{0px}
\xymatrix{
 &&&& \gamma_{2,1}=[1~2][3] \ar@{-}[lllld] \ar@{-}[lld] \ar@{-}[ld]
\ar@{-}[rd] \ar@{-}[rrd] \ar@{-}[rrrrd] &&&& \\
 [1][3] \ar@{-}[rrdd] \ar@{-}[rrrdd]
 & {}
 & ((1~2~3)) \ar@{-}[lldd] \ar@{-}[dd] \ar@{-}[rrrdd]
 & ((1~2~{-3})) \ar@{-}[llldd] \ar@{-}[dd] \ar@{-}[rrrdd]
 & {}
 & ((1~{-3}~{-2})) \ar@{-}[lldd] \ar@{-}[dd] \ar@{-}[rrrdd]
 & ((1~3~{-2})) \ar@{-}[lllldd] \ar@{-}[dd] \ar@{-}[rrdd]
 & {}
 & [2][3] \ar@{-}[llldd] \ar@{-}[lldd]
 \\
 &&&&&&&&\\
 ((1~2)) & {} & ((1~3)) & ((1~{-3})) & {} & ((2~3)) & ((2~{-3})) &
{} & ((1~{-2})) \\
 &&&& \epsilon \ar@{-}[llllu] \ar@{-}[llu] \ar@{-}[lu]  \ar@{-}[ru]
\ar@{-}[rru] \ar@{-}[rrrru] &&&&
}
\endxy
$$
\caption{Connected maximal chains in $\sb(2,1)$.}
\label{fig:connected}
\end{figure}

\begin{rmk}
  One can check that the factorizations $\varphi(C)$ coming from connected
  maximal chains $C$ in $\sb(p,q)$ are precisely the minimal factorizations of
  $\gamma_{p,q}$ in the Weyl group $D_n$. Thus Corollary~\ref{cor:cmc} can be
  restated as follows: the number of minimal factorizations of $\gamma_{p,q}$ in
  $D_n$ is equal to $\frac{2pq}{p+q} {p+q \choose q} p^p q^q$.  Goupil
  \cite[Theorem 3.1]{Gou95} also proved this result by finding a recurrence
  relation.
\end{rmk}

\begin{rmk}
  Since the proof of Lemma~\ref{lem:kk} is a simple manipulation, it is easy and
  straightforward to construct a combinatorial proof for the identity in
  Lemma~\ref{lem:kk}. Together with the result in Section~\ref{sec:2-1-map} we
  get a combinatorial proof of \eqref{eq:cmc-kk}.  It would be interesting to
  find a direct bijective proof of \eqref{eq:cmc-kk} without using
  Lemma~\ref{lem:kk}.
\end{rmk}

\section{A 2-1 map from $\CM(\sb(p,q))$ to $\F_{(p,q)}$}
\label{sec:2-1-map}

Recall that a minimal transitive factorization of $\alpha_{p,q}= (1 \dots p)(p+1
\dots p+q)$ is a sequence $(\eta_1,\ldots, \eta_{p+q})$ of transpositions in
$\S_{p+q}$ such that
\begin{enumerate}[(1)]
\item $\eta_1 \cdots \eta_{p+q} = \alpha_{p,q}$ and
\item $\set{\eta_1, \dots, \eta_{p+q}}$ generates $\S_{p+q}$,
\end{enumerate}
and $\F_{(p,q)}$ is the set of minimal transitive factorizations of $\alpha_{p,q}$.

In this section we will prove the following theorem.

\begin{thm}\label{thm:main}
  There is a 2-1 map from the set of connected maximal chains in $\sb(p,q)$ to
  the set $\F_{(p,q)}$ of minimal transitive factorizations of $\alpha_{p,q}$.
\end{thm}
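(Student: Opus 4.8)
The plan is to realize the $2$-$1$ map through the forgetful homomorphism $\phi\colon B_{p+q}\to\S_{p+q}$ defined by $\phi(\sigma)(i)=\abs{\sigma(i)}$ for $i\in\set{1,\dots,p+q}$. A direct check shows $\phi$ is a group homomorphism with $\phi(\epsilon_i)=\mathrm{id}$, $\phi\big(((a~b))\big)=(\abs a~\abs b)$, and $\phi(\gamma_{p,q})=\alpha_{p,q}$. Given a connected maximal chain $C$ with $\varphi(C)=(\tau_1,\dots,\tau_{p+q})$, I set $\Psi(C)=(\phi(\tau_1),\dots,\phi(\tau_{p+q}))$. By Lemma~\ref{lem:linked} no $\tau_i$ equals some $\epsilon_i$, so each $\phi(\tau_i)$ is a genuine transposition of $\S_{p+q}$, and since $\phi$ is a homomorphism $\phi(\tau_1)\cdots\phi(\tau_{p+q})=\phi(\gamma_{p,q})=\alpha_{p,q}$. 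For transitivity I would argue as usual: the orbits of $\langle\phi(\tau_1),\dots,\phi(\tau_{p+q})\rangle$ are unions of the two cycles $\set{1,\dots,p}$ and $\set{p+1,\dots,p+q}$ of $\alpha_{p,q}$, while Lemma~\ref{lem:linked} supplies at least one \emph{connected} $\tau_i=((a~b))$, whose image $(\abs a~\abs b)$ links the two blocks; hence the action is transitive and $\Psi(C)\in\F_{(p,q)}$.

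Next I would show that every fiber of $\Psi$ has exactly two elements. Fix $(\eta_1,\dots,\eta_{p+q})\in\F_{(p,q)}$ with $\eta_k=(a_k~b_k)$, $a_k<b_k$. By Lemma~\ref{lem:linked}, a preimage $C$ has $\varphi(C)=(\tau_1,\dots,\tau_{p+q})$ with each $\tau_k$ a paired nonzero transposition satisfying $\phi(\tau_k)=\eta_k$, hence $\tau_k\in\set{((a_k~b_k)),((a_k~{-b_k}))}$ --- exactly two choices per index --- and $\tau_1\cdots\tau_{p+q}=\gamma_{p,q}$ since this is the top of the chain. Conversely, any tuple of such lifts with $\tau_1\cdots\tau_{p+q}=\gamma_{p,q}$ automatically gives a connected maximal chain: with $\pi_m=\tau_1\cdots\tau_m$, each step multiplies by one type $B$ transposition, so $\ell(\pi_m)-\ell(\pi_{m-1})$ is odd; being at most $1$ in absolute value it equals $\pm1$. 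As $\ell(\pi_0)=0$ and $\ell(\pi_{p+q})=\rank(\gamma_{p,q})=p+q$ by \eqref{eq:rankB}, every step is $+1$, so $\epsilon=\pi_0<\pi_1<\cdots<\pi_{p+q}=\gamma_{p,q}$ is a maximal chain of $[\epsilon,\gamma_{p,q}]$, and it is connected because no $\tau_k$ is an $\epsilon_i$. Thus a fiber of $\Psi$ is in bijection with the set of lift-tuples whose product equals $\gamma_{p,q}$.

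Finally I would count those lift-tuples by a short computation in $B_{p+q}=(\mathbb{Z}/2)^{p+q}\rtimes\S_{p+q}$. Writing $\tau_k=\big(\delta_k(e_{a_k}+e_{b_k}),\,\eta_k\big)$ with $\delta_k\in\mathbb{F}_2$ (so $\delta_k=0$ gives $((a_k~b_k))$ and $\delta_k=1$ gives $((a_k~{-b_k}))$) and $\gamma_{p,q}=(e_p+e_{p+q},\,\alpha_{p,q})$, expanding the product in the semidirect product identifies $\tau_1\cdots\tau_{p+q}=\gamma_{p,q}$ with the single $\mathbb{F}_2$-affine equation $\sum_{k=1}^{p+q}\delta_k\,v_k=e_p+e_{p+q}$, where $v_k=e_{\rho_{k-1}(a_k)}+e_{\rho_{k-1}(b_k)}$ and $\rho_{k-1}=\eta_1\cdots\eta_{k-1}$. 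The $v_k$ are the edge vectors of the graph $H$ on $\set{1,\dots,p+q}$ whose edges are the supports of the conjugates $\eta'_k:=\rho_{k-1}\eta_k\rho_{k-1}^{-1}$; a telescoping computation gives $\rho_m=\eta'_m\cdots\eta'_1$, whence $\langle\eta'_1,\dots,\eta'_{p+q}\rangle=\langle\eta_1,\dots,\eta_{p+q}\rangle=\S_{p+q}$ and $H$ is connected. Then the $v_k$ span the $(p+q-1)$-dimensional space of even-weight vectors, which contains $e_p+e_{p+q}$, so the affine equation has exactly $2$ solutions; hence $\Psi$ is $2$-to-$1$.

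I expect the last paragraph to be the crux: converting the identity $\tau_1\cdots\tau_{p+q}=\gamma_{p,q}$ into the linear condition and proving connectedness of $H$ (equivalently, that the two lifts over a given factorization differ precisely by flipping signs along the unique cycle of $H$). The first two paragraphs are then routine bookkeeping built on Lemma~\ref{lem:linked}, the orbit-versus-cycle principle, and the parity of the absolute length in $B_{p+q}$.
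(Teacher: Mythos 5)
Your map is literally the paper's map $\left|\varphi^+\right| = \left|\cdot\right|\circ(\cdot)^+\circ\varphi$: both send a connected maximal chain $C$ to the termwise ``absolute values'' of $\varphi(C)$, and your first two paragraphs correctly reproduce the content of Lemmas~\ref{lem:varphi} and \ref{lem:abs} together with the identification of each fiber with the set of sign-lifts $(\tau_1,\dots,\tau_{p+q})$, $\tau_k\in\set{((a_k~b_k)),((a_k~{-b_k}))}$, whose product is $\gamma_{p,q}$. Where you genuinely diverge is the crux, Lemma~\ref{lem:plus}: the paper shows each fiber has exactly two elements by (i) an explicit rewriting with the relations \eqref{eq:relation}, propagating $\epsilon$'s along a path supplied by transitivity, to realize every admissible product $\delta$ with $\delta^+=\beta_{p,q}$ and $\#\ind(\delta)$ even; (ii) toggling all connected transpositions to produce a second preimage; and (iii) the pigeonhole count $2^{p+q}=\sum_\delta\#\F(\delta)\ge 2\cdot 2^{p+q-1}$. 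You instead linearize over $\mathbb{F}_2$: the product condition becomes a single affine equation whose coefficient vectors are the edge vectors of a connected graph on $p+q$ vertices with $p+q$ edges, so the solution set is a coset of the one-dimensional cycle space and has exactly two elements. The two arguments are close to being the same computation in disguise --- the paper's count amounts to proving your linear map is surjective onto the even-weight space with all fibers of size at least two --- but yours is more direct, avoids the rewriting procedure entirely, and identifies the two preimages concretely as differing by a sign flip along the unique cycle of $H$ (which is precisely the paper's ``toggle the connected transpositions''), at the modest cost of the semidirect-product bookkeeping, where the transported edge should be pinned down as the support of a conjugate of $\eta_k$ by a partial product (either convention leaves the graph connected, so the argument is robust). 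The steps you leave implicit --- that partial products of $p+q$ reflections reaching length $p+q$ all lie in $[\epsilon,\gamma_{p,q}]$ by subadditivity of $\ell$, and that each reflection changes $\ell$ by exactly $\pm1$ via the determinant of the signed permutation matrix --- are standard and are glossed at the same level in the paper's own Lemma~\ref{lem:varphi}.
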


In order to prove Theorem~\ref{thm:main} we need some definitions.

\begin{defn}[Two maps $(\cdot)^+$ and $\abs{\cdot}$]
  We introduce the following two maps.
\begin{enumerate}[(1)]
\item The map $(\cdot)^+: B_n \to B_n$ is defined by
$$
\sigma^+(i) =
\begin{cases}
\abs{\sigma(i)} &\text{ if~ $i>0$,}\\
-\abs{\sigma(i)} &\text{ if~ $i<0$.}
\end{cases}
$$
\item The map $\abs{\cdot}: B_n \to \S_n$ is defined by $\abs{\sigma}(i) = \abs{\sigma(i)}$ for all $i \in \set{1,\ldots,n}$.
\end{enumerate}
\end{defn}

\begin{defn}
A $(p+q)$-tuple $(\tau_1,\ldots, \tau_{p+q})$ of transpositions in $B_{p+q}$ is called a \emph{minimal transitive factorization of type $B$} of $\gamma_{p,q}=[1 \dots p][p+1 \dots  p+q]$ if it satisfies
\begin{enumerate}[(1)]
\item $\tau_1 \dots \tau_{p+q} = \gamma_{p,q}$,
\item $\set{\abs{\tau_1}, \dots, \abs{\tau_{p+q}}}$ generates $\S_{p+q}$.
\end{enumerate}
Denote by $\F_{(p,q)}^{(B)}$ the set of minimal transitive factorizations of type $B$ of $\gamma_{p,q}$.
\end{defn}

\begin{defn}
A $(p+q)$-tuple $(\sigma_1,\ldots, \sigma_{p+q})$ of transpositions in $B_{p+q}$ is called a \emph{positive} minimal transitive factorization of type $B$ of $\beta_{p,q}= ((1 \dots p))((p+1 \linebreak[2] \dots  p+q))$ if it satisfies
\begin{enumerate}[(1)]
\item $\sigma_1 \dots \sigma_{p+q} = \beta_{p,q}$,
\item $\set{\abs{\sigma_1}, \dots, \abs{\sigma_{p+q}}}$ generates $\S_{p+q}$,
\item $\sigma_i=\sigma_i^+$ for all $i=1,\ldots,p+q$.
\end{enumerate}
Denote by $\F_{(p,q)}^{+}$ the set of positive minimal transitive factorizations of type $B$ of $\beta_{p,q}$.
\end{defn}

For the rest of this section we will prove the following:
\begin{enumerate}
\item The map $\varphi:\CM(\sb(p,q)) \to \F_{(p,q)}^{(B)}$ is a
  bijection. (Lemma~\ref{lem:varphi})
\item There is a 2-1 map $\left( \cdot \right)^+:\F_{(p,q)}^{(B)} \to
  \F_{(p,q)}^{+}$. (Lemma~\ref{lem:plus})
\item There is a bijection $\left| \cdot \right |:\F_{(p,q)}^{+} \to
  \F_{(p,q)}$. (Lemma~\ref{lem:abs})
\end{enumerate}
By the above three statements the composition $\left| \varphi ^+ \right|:= \left| \cdot \right | \circ \left( \cdot \right)^+ \circ \varphi$ is a
2-1 map from $\CM(\sb(p,q))$ to $\F_{(p,q)}$, which completes the proof of
Theorem~\ref{thm:main}.  Since the proofs of the first and the third statements
are simpler, we will present these first.


\begin{lem}\label{lem:varphi}
  The map $\varphi:\CM(\sb(p,q)) \to \F_{(p,q)}^{(B)}$ is a bijection.
\end{lem}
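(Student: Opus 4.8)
The plan is to show that $\varphi$ is well-defined, injective, and surjective as a map from $\CM(\sb(p,q))$ to $\F_{(p,q)}^{(B)}$. For \emph{well-definedness}, take a connected maximal chain $C = \{\epsilon = \pi_0 < \pi_1 < \cdots < \pi_{p+q} = \gamma_{p,q}\}$ and set $\varphi(C) = (\tau_1,\dots,\tau_{p+q})$ with $\tau_i = \pi_{i-1}^{-1}\pi_i$. From the definition of the absolute order, each $\tau_i$ is a type $B$ transposition and $\pi_i = \tau_1\cdots\tau_i$; in particular $\tau_1\cdots\tau_{p+q} = \gamma_{p,q}$, giving condition~(1). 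For condition~(2), I must check that $\set{\abs{\tau_1},\dots,\abs{\tau_{p+q}}}$ generates $\S_{p+q}$. Here I would use Lemma~\ref{lem:linked}: since $C$ is connected, none of the $\tau_i$ is of the form $\epsilon_i = [i]$, so each $\abs{\tau_i}$ is an honest transposition $(|i|\ |j|)$ in $\S_{p+q}$, and at least one of them is connected, i.e.\ joins the block $\{1,\dots,p\}$ to the block $\{p+1,\dots,p+q\}$. Passing to absolute values, a cover relation of type~\eqref{type:b}, \eqref{type:c}, or \eqref{type:d} merges or splits cycles of $\abs{\pi_i}$, and one checks that along the chain the partition of $\{1,\dots,p+q\}$ into orbits of the group generated by $\abs{\tau_1},\dots,\abs{\tau_i}$ only coarsens; since some $\abs{\tau_i}$ is connected, the two initial blocks get fused, and since $\gamma_{p,q}$ has rank $p+q$, the $p+q$ transpositions $\abs{\tau_i}$ must generate a transitive group, hence all of $\S_{p+q}$ (a transitive group generated by $p+q-1$ or more transpositions, here exactly $p+q$, on $p+q$ points — one uses the standard fact that $n-1$ transpositions generating a transitive subgroup of $\S_n$ generate $\S_n$, applied after discarding redundancies).

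For \emph{injectivity}, observe that $\varphi(C)$ determines the chain: from $(\tau_1,\dots,\tau_{p+q})$ we recover $\pi_i = \tau_1\cdots\tau_i$, so $C$ is reconstructed. This is immediate and needs no extra work beyond noting that distinct chains give distinct partial-product sequences.

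For \emph{surjectivity}, start with $(\tau_1,\dots,\tau_{p+q}) \in \F_{(p,q)}^{(B)}$ and define $\pi_i = \tau_1\cdots\tau_i$, so $\pi_0 = \epsilon$ and $\pi_{p+q} = \gamma_{p,q}$. I need to verify that this is a maximal chain in $\sb(p,q)$, i.e.\ that $\pi_{i-1} < \pi_i$ is a cover relation (equivalently $\rank(\pi_i) = \rank(\pi_{i-1}) + 1$ for each $i$, and $\pi_i \le \gamma_{p,q}$), and that the resulting chain is connected. The rank function \eqref{eq:rankB} counts $p+q$ minus the number of paired nonzero cycles; multiplying by one type $B$ transposition changes the cycle structure by merging/splitting, so $\abs{\rank(\pi_i) - \rank(\pi_{i-1})} \le 1$, and since the total rank goes from $0$ to $p+q$ in $p+q$ steps, each step must increase the rank by exactly $1$ — this forces every $\pi_{i-1} < \pi_i$ to be a cover and in particular each $\pi_i$ to lie in the interval $[\epsilon,\gamma_{p,q}] = \sb(p,q)$. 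Finally, the chain is connected: if it were disconnected, Lemma~\ref{lem:linked} (via its proof, which shows a disconnected chain has exactly two cover relations of type~\eqref{type:a}, i.e.\ two $\tau_i$ equal to some $\epsilon_j$) would force two of the $\tau_i$ to be zero transpositions $\epsilon_j$, whence two of the $\abs{\tau_i}$ equal the identity of $\S_{p+q}$, leaving only $p+q-2$ nontrivial transpositions among the $\abs{\tau_i}$ — too few to generate the transitive group $\S_{p+q}$ on $p+q$ points, contradicting condition~(2). Hence $C \in \CM(\sb(p,q))$ and $\varphi(C) = (\tau_1,\dots,\tau_{p+q})$.

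The main obstacle is the well-definedness direction, specifically the passage between the group-theoretic condition ``$\set{\abs{\tau_i}}$ generates $\S_{p+q}$'' and the combinatorial/order-theoretic condition ``$C$ is connected.'' The cleanest way to handle both well-definedness and the connectivity part of surjectivity uniformly is to prove the equivalence: for a maximal chain $C$ of $\sb(p,q)$, the tuple $\varphi(C)$ has $\set{\abs{\tau_1},\dots,\abs{\tau_{p+q}}}$ generating $\S_{p+q}$ \emph{if and only if} $C$ is connected. The forward direction of this equivalence follows because a disconnected chain contributes two trivial generators (by the analysis in the proof of Lemma~\ref{lem:linked}); the backward direction follows from the merging analysis above together with the standard fact that $n$ transpositions on $[n]$ with transitive orbit generate $\S_n$ (after deleting at most one redundant one). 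Once this equivalence is in hand, well-definedness and surjectivity both reduce to the rank-counting argument, and injectivity is trivial.
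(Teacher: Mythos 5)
Your proof is correct and follows essentially the same route as the paper's: Lemma~\ref{lem:linked} supplies that a connected chain produces no transposition $\epsilon_i$ and at least one connected transposition (whence $\set{\abs{\tau_1},\dots,\abs{\tau_{p+q}}}$ generates $\S_{p+q}$), and the inverse map is the partial-product chain $\pi_i=\tau_1\cdots\tau_i$. You spell out in more detail than the paper the rank-counting and orbit-merging steps showing that the partial products form a connected maximal chain, but the underlying argument is the same.
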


\begin{proof}
  Given a connected maximal chain $C=\{\epsilon = \pi_0 < \pi_1< \dots <
  \pi_{p+q} =\gamma_{p,q}\}$ in $\sb(p,q)$, the elements in the sequence
  $\varphi(C)=(\tau_{1},\dots,\tau_{p+q})$ are transpositions with
  $\tau_1\cdots\tau_{p+q}=\gamma_{p+q}$.  By Lemma~\ref{lem:linked}, at least
  one of $\tau_i$'s is connected. Thus $\set{\abs{\tau_1}, \dots,
    \abs{\tau_{p+q}}}$ generates $\S_{p+q}$, and $\varphi(C)\in
  \F_{(p,q)}^{(B)}$.  Conversely, if $\tau=(\tau_{1},\dots,\tau_{p+q})\in
  \F_{(p,q)}^{(B)}$, then $\varphi^{-1}(\tau)=\{\epsilon = \pi_0 < \pi_1< \dots
  < \pi_{p+q} =\gamma_{p,q}\}$, where $\pi_i=\tau_1\cdots\tau_i$, is a connected
  maximal chain in $\sb(p,q)$ because $\{|\tau_1|,\dots,|\tau_{p+q}|\}$ generates
  $\S_{p+q}$.
\end{proof}

\begin{lem}\label{lem:abs}
There is a bijection $\left| \cdot \right |:\F_{(p,q)}^{+} \to \F_{(p,q)}$.
\end{lem}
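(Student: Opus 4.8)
The plan is to recognize $\abs{\cdot}$ as the restriction of the standard ``forget the signs'' homomorphism $B_n\to\S_n$, and to observe that under it the positive minimal transitive factorizations of $\beta_{p,q}$ correspond term by term to the minimal transitive factorizations of $\alpha_{p,q}$.

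First I would set up the two elementary observations that do all the work. Let $B_n^{+}=\set{\sigma\in B_n:\sigma=\sigma^{+}}$. Unwinding the definition of $(\cdot)^{+}$, a signed permutation $\sigma$ satisfies $\sigma=\sigma^{+}$ exactly when $\sigma$ maps $\set{1,\dots,n}$ onto itself; hence $B_n^{+}$ is a subgroup of $B_n$, and $\abs{\cdot}$ restricts to a group isomorphism $B_n^{+}\to\S_n$, whose inverse sends $\rho\in\S_n$ to the unique element of $B_n^{+}$ agreeing with $\rho$ on $\set{1,\dots,n}$. Moreover $\abs{\cdot}\colon B_n\to\S_n$ is itself a group homomorphism, being the quotient map modulo the normal subgroup of sign changes, and this is what makes the product condition transfer. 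A short computation shows $\epsilon_i^{+}\neq\epsilon_i$, while $((i~j))^{+}=((i~j))$ exactly when $i$ and $j$ have the same sign; since $((i~j))=((-i~-j))$, the positive type $B$ transpositions are precisely the $((i~j))$ with $1\le i<j\le n$, and $\abs{((i~j))}=(i~j)$, so $\abs{\cdot}$ is a bijection from the positive type $B$ transpositions onto the transpositions of $\S_n$. Finally $\abs{\beta_{p,q}}=(1\dots p)(p+1\dots p+q)=\alpha_{p,q}$.

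Next I define $\abs{\cdot}\colon\F_{(p,q)}^{+}\to\F_{(p,q)}$ componentwise, $(\sigma_1,\dots,\sigma_{p+q})\mapsto(\abs{\sigma_1},\dots,\abs{\sigma_{p+q}})$. If $(\sigma_1,\dots,\sigma_{p+q})\in\F_{(p,q)}^{+}$, condition~(3) forces each $\sigma_i$ to be a positive type $B$ transposition, so each $\abs{\sigma_i}$ is a transposition of $\S_{p+q}$; moreover $\abs{\sigma_1}\cdots\abs{\sigma_{p+q}}=\abs{\sigma_1\cdots\sigma_{p+q}}=\abs{\beta_{p,q}}=\alpha_{p,q}$, and condition~(2) for $\F_{(p,q)}$ is literally condition~(2) for $\F_{(p,q)}^{+}$; hence the image lies in $\F_{(p,q)}$. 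For the inverse, given $(\eta_1,\dots,\eta_{p+q})\in\F_{(p,q)}$ I lift each transposition $\eta_i=(a~b)$ to $\sigma_i:=((a~b))$. Each $\sigma_i$ lies in $B_{p+q}^{+}$, hence so does $\sigma_1\cdots\sigma_{p+q}$; since $\abs{\cdot}$ is injective on $B_{p+q}^{+}$, $\beta_{p,q}\in B_{p+q}^{+}$, and $\abs{\sigma_1\cdots\sigma_{p+q}}=\eta_1\cdots\eta_{p+q}=\alpha_{p,q}=\abs{\beta_{p,q}}$, we conclude $\sigma_1\cdots\sigma_{p+q}=\beta_{p,q}$; conditions~(2) and~(3) are immediate. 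These two assignments are mutually inverse because lifting a transposition to a positive type $B$ transposition inverts the bijection $((i~j))\mapsto(i~j)$.

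The only point requiring care is this last product identity: it is not enough that $\abs{\sigma_1\cdots\sigma_{p+q}}=\alpha_{p,q}$, one must know that the product in $B_{p+q}$ equals $\beta_{p,q}$ on the nose, which is exactly why one tracks the subgroup $B_{p+q}^{+}$ on which $\abs{\cdot}$ is injective. Everything else is routine bookkeeping, so I do not anticipate a genuine obstacle.
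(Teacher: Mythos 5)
Your proposal is correct and follows essentially the same route as the paper: both send $(\sigma_1,\dots,\sigma_{p+q})$ to $(\abs{\sigma_1},\dots,\abs{\sigma_{p+q}})$, using that positivity forces each $\sigma_i$ to have the form $((j~k))$ with $j,k>0$ and that the inverse lifts $(j~k)$ to $((j~k))$. You simply spell out the detail the paper leaves implicit, namely that the product identity transfers back because $\abs{\cdot}$ is injective on the subgroup of positive signed permutations.
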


\begin{proof}
  Let $(\sigma_1, \dots, \sigma_{p+q})\in \F_{(p,q)}^{+}$. Each $\sigma_i$ can
  be written as $\sigma_i = ((j~k))$ for some positive integers $j$ and $k$. In
  this case we let $\eta_i=\left| \sigma_i \right| = (j~k)\in S_{p+q}$. Then the
  map $\left| \cdot \right |:\F_{(p,q)}^{+} \to \F_{(p,q)}$ sending $(\sigma_1,
  \dots, \sigma_{p+q})$ to $(\eta_1, \dots, \eta_{p+q})$ is a bijection.
\end{proof}

Recall $\epsilon_i = [i] =(i~-i)$.  We write $\overline{((i~j))} :=
((i~{-j}))$.  It is easy to see that for $i, j \in \set{\pm 1, \dots, \pm
  (p+q)}$, we have
\begin{align}
[i~j] = \epsilon_i ((i ~j)) = ((i~j)) \epsilon_j = \overline{((i~j))} \epsilon_i= \epsilon_j \overline{((i~j))}.
\label{eq:relation}
\end{align}

\begin{lem}\label{lem:plus}
  There is a 2-1 map $\left( \cdot \right)^+:\F_{(p,q)}^{(B)} \to \F_{(p,q)}^{+}$.
\end{lem}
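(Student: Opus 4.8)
The plan is to factor $(\cdot)^+$ through $\abs{\cdot}$ and then reduce the fibre count to a rank computation over $\mathbb{F}_2$. First I would observe that $(\cdot)^+\colon B_n\to B_n$ (where $n=p+q$) is a group homomorphism: it is the composite of $\abs{\cdot}\colon B_n\to\S_n$ with the standard embedding $\S_n\hookrightarrow B_n$ that sends $\pi$ to the signed permutation $i\mapsto\pi(i)$, since $\sigma^+(i)$ has the same sign as $i$ and $\abs{\sigma^+}=\abs{\sigma}$. Hence $\tau_1^+\cdots\tau_{p+q}^+=(\tau_1\cdots\tau_{p+q})^+$, and comparing cycle structures gives $\gamma_{p,q}^+=\beta_{p,q}$ (each zero cycle satisfies $[a_1\cdots a_k]^+=((a_1\cdots a_k))$). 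By Lemmas~\ref{lem:varphi} and~\ref{lem:linked}, every $\tau\in\F_{(p,q)}^{(B)}$ has each $\tau_i$ equal to a paired nonzero transposition $((a_i~b_i))$, so $\tau_i^+=((\abs{a_i}~\abs{b_i}))$ is a positive transposition with $\abs{\tau_i^+}=\abs{\tau_i}$; thus $(\tau)^+\in\F_{(p,q)}^{+}$ and the map is well defined.

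Next, fix $\sigma=(\sigma_1,\dots,\sigma_{p+q})\in\F_{(p,q)}^{+}$ and write $\sigma_i=((a_i~b_i))$ with $a_i,b_i\in\set{1,\dots,p+q}$. A type $B$ transposition $\tau$ has $\tau^+=\sigma_i$ if and only if $\tau\in\set{\sigma_i,\overline{\sigma_i}}$ (an $\epsilon_j$ has $\epsilon_j^+=\mathrm{id}$, while $((c~d))^+=((\abs{c}~\abs{d}))$), and since $\abs{\overline{\sigma_i}}=\abs{\sigma_i}$, any tuple $(\tau_1,\dots,\tau_{p+q})$ with $\tau_i\in\set{\sigma_i,\overline{\sigma_i}}$ automatically meets the generation condition. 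So, encoding such a choice by $x\in\mathbb{F}_2^{p+q}$ (with $\tau_i=\sigma_i$ if $x_i=0$ and $\tau_i=\overline{\sigma_i}$ if $x_i=1$), the fibre of $(\cdot)^+$ over $\sigma$ is the set of $x$ for which $\tau_1\cdots\tau_{p+q}=\gamma_{p,q}$, and it remains to show there are exactly two such $x$.

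For $\omega\in B_n$ let $e_i(\omega)\in\mathbb{F}_2$ be $0$ or $1$ according as $\omega(i)>0$ or $\omega(i)<0$. Since $\abs{\tau_1\cdots\tau_{p+q}}=\abs{\beta_{p,q}}=\abs{\gamma_{p,q}}$ for every $x$, the equation $\tau_1\cdots\tau_{p+q}=\gamma_{p,q}$ only constrains the sign vectors, and at $x=0$ the product is $\beta_{p,q}$, whose sign vector is $0$. Iterating the cocycle identity $e_i(\omega\omega')=e_i(\omega')+e_{\abs{\omega'}(i)}(\omega)$ and using that $e_j(\sigma_i)=0$ while $e_j(\overline{\sigma_i})=1$ exactly for $j\in\operatorname{supp}(\abs{\sigma_i})$, one finds that $x\mapsto e_i(\tau_1\cdots\tau_{p+q})$ is the $\mathbb{F}_2$-linear map $x\mapsto\sum_k x_k v_k$, where $v_k\in\mathbb{F}_2^{p+q}$ is the indicator vector of $\operatorname{supp}(r_k)$ and $r_k:=\abs{\sigma_{k+1}\cdots\sigma_{p+q}}^{-1}\abs{\sigma_k}\abs{\sigma_{k+1}\cdots\sigma_{p+q}}$ is a transposition in $\S_{p+q}$. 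The partial products $r_{p+q}r_{p+q-1}\cdots r_k$ telescope to $\abs{\sigma_k\sigma_{k+1}\cdots\sigma_{p+q}}$, so $\langle r_1,\dots,r_{p+q}\rangle$ contains every $\abs{\sigma_k}$ and hence equals $\S_{p+q}$; therefore the graph on $\set{1,\dots,p+q}$ with edges $\operatorname{supp}(r_1),\dots,\operatorname{supp}(r_{p+q})$ is connected, and its $p+q$ edge vectors $v_k$ span exactly the even-weight hyperplane $H=\set{v\in\mathbb{F}_2^{p+q}:\textstyle\sum_i v_i=0}$ (each $v_k$ lies in $H$, and a spanning tree supplies $p+q-1$ independent ones). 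Thus $x\mapsto\sum_k x_k v_k$ is a surjection onto $H$ with one-dimensional kernel. Finally, the sign vector $(e_i(\gamma_{p,q}))_i$ is the indicator of $\set{p,p+q}$, which has even weight and so lies in $H$; hence $\tau_1\cdots\tau_{p+q}=\gamma_{p,q}$ has exactly two solutions $x$, so every fibre of $(\cdot)^+$ has size $2$ and $(\cdot)^+$ is a 2-1 map. The heart of the argument — and the step where I expect the most delicate bookkeeping — is the cocycle computation that converts the identity $\tau_1\cdots\tau_{p+q}=\gamma_{p,q}$ in the nonabelian group $B_n$ into a linear system over $\mathbb{F}_2$; once this is set up, the connectivity of the $r_k$ together with the standard edge-space fact makes the count routine.
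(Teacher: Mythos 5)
Your proof is correct, but it reaches the conclusion by a genuinely different mechanism than the paper. Both arguments reduce the problem to the same skeleton: the fibre of $(\cdot)^+$ over $\sigma$ sits inside the $2^{p+q}$ sign-toggles $(\tau_1,\dots,\tau_{p+q})$ with $\tau_i\in\{\sigma_i,\overline{\sigma_i}\}$, the generation condition is automatic, and only the product constraint $\tau_1\cdots\tau_{p+q}=\gamma_{p,q}$ cuts the count down. The paper handles that constraint by a global pigeonhole: it shows every product $\delta$ of such a tuple lies in a set $B(\beta_{p,q})$ of size $2^{p+q-1}$ (via the parity of $\ind(\delta)$), proves each fibre $\F(\delta)$ has at least two elements by an explicit rewriting procedure (pushing $\epsilon$'s along a transposition path from $1$ to $p+1$, which also gives surjectivity) together with the connected/disconnected toggle $\tau\mapsto\tau'$, and concludes $\#\F(\delta)=2$ for all $\delta$ simultaneously from $2^{p+q}=\sum_\delta\#\F(\delta)$. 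You instead linearize the constraint over $\mathbb{F}_2$ and compute the kernel exactly: the sign vector of the product is a linear function of the toggle vector $x$ whose columns are the edge indicators of the conjugated transpositions $r_k$, transitivity makes the associated graph on $p+q$ vertices connected, so the $p+q$ edge vectors span the even-weight hyperplane and the map has a one-dimensional kernel; since the sign vector of $\gamma_{p,q}$ (the indicator of $\{p,p+q\}$) lies in that hyperplane, every fibre has exactly two elements. Your route buys surjectivity and the exact fibre size in one stroke and makes the ``why exactly two'' transparent (it is the cycle space of a connected graph with exactly one independent cycle), at the cost of the cocycle bookkeeping you flag; the paper's route avoids any linear algebra but needs the lower bound $\#\F(\delta)\ge 2$ for \emph{every} $\delta\in B(\beta_{p,q})$, not just $\gamma_{p,q}$, plus the separate counting of $B(\beta_{p,q})$. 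All the individual steps you assert check out: $(\cdot)^+$ is indeed the composite of $\abs{\cdot}$ with the standard section $\S_n\hookrightarrow B_n$, the only type $B$ transpositions mapping to $((a~b))$ are $((a~b))$ and $((a~{-b}))$ (as $\epsilon_j^+=\epsilon$), the cocycle identity and the conjugation/telescoping computation for the $r_k$ are right, and the appeal to Lemmas~\ref{lem:varphi} and~\ref{lem:linked} to exclude factors $\epsilon_i$ is legitimate since both are established independently of this lemma.
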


\begin{proof}
  For $(\tau_{1},\tau_{2},\dots,\tau_{p+q})\in \F_{(p,q)}^{(B)}$, we define
  $\left(\tau_{1},\tau_{2},\dots,\tau_{p+q}\right)^+ =
  (\tau_{1}^+,\tau_{2}^+,\dots,\tau_{p+q}^+)$.  Since
  $\tau_{1}^+\dots\tau_{p+q}^+=\gamma_{p,q}^+=\beta_{p,q}$, we have
  $\left(\tau_{1},\tau_{2},\dots,\tau_{p+q} \right)^+\in \F_{(p,q)}^{+}$.

The map $\left( \cdot \right)^+$ is surjective: Suppose
$(\sigma_{1},\sigma_{2},\dots,\sigma_{p+q}) \in \F_{(p,q)}^{+}$. Since
$\sigma_{1}\sigma_{2}\dots\sigma_{p+q}=\beta_{p,q}$ and $\gamma_{p,q} =
\epsilon_{p+1} \epsilon_1 \beta_{p,q}$, we have
\begin{align}
  \gamma_{p,q} = \epsilon_{p+1}
  \epsilon_1\sigma_{1}\sigma_{2}\dots\sigma_{p+q}. \label{eq:beta_gamma}
\end{align}
By \eqref{eq:relation}, if $\sigma_{\ell} = ((u~v))$, we have
\begin{align}
  \epsilon_u \sigma_{1}\sigma_{2}\dots\sigma_{p+q} = \epsilon_v
  \widetilde{\sigma}_{1}\dots\widetilde{\sigma}_{\ell-1}
  \overline{\sigma_{\ell}}\sigma_{\ell+1} \cdots \sigma_{p+q},
\label{eq:convert}
\end{align}
where
$$\widetilde{\sigma}_i =
\begin{cases}
\overline{\sigma_i} &\text{if $\sigma_i$ has either $u$ or $v$,}\\
\sigma_i &\text{otherwise.}
\end{cases}
$$
Since $(\sigma_{1},\sigma_{2},\dots,\sigma_{p+q})$ is transitive, we can find
integers $1 = a_0, a_1, \dots, a_k = p+1$ such that $((a_{i-1}, a_i))\in
\set{\sigma_{1},\sigma_{2},\dots,\sigma_{p+q}}$ for all $1\leq i \leq k$.  Using
this fact and the relation in \eqref{eq:convert}, we can rewrite
\eqref{eq:beta_gamma} as
\[
\gamma_{p,q}
= \epsilon_{p+1} (\epsilon_{p+1} \tau_{1}\tau_{2}\dots\tau_{p+q})
= \tau_{1}\tau_{2}\dots\tau_{p+q},
\]
where $\tau_i = \sigma_i$ or $\overline{\sigma_i}$ for all $i=1,2,\dots, p+q$.
Hence, $\left( \tau_{1},\tau_{2},\dots,\tau_{p+q} \right)^+ =
(\sigma_{1},\sigma_{2},\dots,\sigma_{p+q})$ and $\left( \cdot \right)^+$ is surjective.

We need to show that $\left( \cdot \right)^+$ is two-to-one.  Let us fix
$\sigma=(\sigma_{1},\sigma_{2},\dots,\sigma_{p+q}) \in \F_{(p,q)}^{+}$. Since
$\left( \cdot \right)^+$ is surjective, there is $\tau=(\tau_1, \dots, \tau_{p+q})\in
\F_{(p,q)}^{(B)}$ satisfying $\tau^+=\sigma$. Then $\tau'=(\tau'_1 \dots
\tau'_{p+q}) \in \F_{(p,q)}^{(B)}$ defined by
\begin{equation}
  \label{eq:tau'}
\tau'_i =
\begin{cases}
\tau_i &\text{if $\tau_i$ is disconnected}\\
\overline{\tau_i} &\text{if $\tau_i$ is connected,}
\end{cases}
\end{equation}
also satisfies $\left( \tau' \right)^+=\sigma$.  Since $\tau$ has at least one connected
transposition, $\tau\ne\tau'$. Hence the preimage of $\sigma$ under $\left( \cdot \right)^+$ has at least two
elements. In order to prove that the preimage of $\sigma$ under $\left( \cdot \right)^+$ has exactly two elements,
we consider the sets
\begin{align*}
  \F&=\set{(\tau_1,\ldots,\tau_{p+q}): \tau_i^+
    = \sigma_i \text{ for all $i=1,\ldots,p+q$}},\\
  \F(\delta) &=\set{(\tau_1,\ldots,\tau_{p+q}):
    \tau_1\dots\tau_{p+q}=\delta,~ \tau_i^+ = \sigma_i \text{ for all
      $i=1,\ldots,p+q$}}.
\end{align*}
Then $\F$ has $2^{p+q}$ elements, and all preimages of $\sigma$ under $\left( \cdot \right)^+$  belong to $\F(\gamma_{p,q})$.

Suppose $(\tau_1,\ldots,\tau_{p+q}) \in \F(\delta)$.  Since $\delta^+ =
\beta_{p,q}$, we have $(\delta~{\beta_{p,q}}^{-1})^{+}=\epsilon$. Since
$\delta~{\beta_{p,q}}^{-1}$ is an even permutation as a permutation on $\set{\pm
  1,\ldots,\pm (p+q)}$, we have $\delta~{\beta_{p,q}}^{-1} = \epsilon_{i_1}
\cdots \epsilon_{i_{2k}}$ for some $1\le i_1 < \cdots < i_{2k} \le p+q$.  Thus
the set
$$\ind(\delta):=\set{i: \delta(i)=-\beta_{p,q}(i)\text{ and } 1\le i \le p+q}$$
has even cardinality.  Define the set
$$B(\beta_{p,q}) = \set{\delta\in B_{p+q}: \text{$\delta^+ = \beta_{p,q}$ and $\# \ind(\delta)$ is even}},$$
whose cardinality is $2^{p+q-1}$. Then we have
\begin{equation}
  \label{eq:F}
\#\F = \sum_{\delta\in B(\beta_{p,q})} \#\F(\delta).
\end{equation}

We claim that $\#\F(\delta)\ge2$ for each $\delta\in B(\beta_{p,q})$. Then by
the claim together with $\#\F=2^{p+q}$, $\#\F(\delta)=2^{p+q-1}$, and
\eqref{eq:F}, we get $\#\F(\delta)=2$ for each $\delta\in B(\beta_{p,q})$. In
particular, we have $\#\F(\gamma_{p,q})=2$, which implies that
the preimage of $\sigma$ under $\left( \cdot \right)^+$ has exactly two elements, thus completing the proof of this
lemma.

It remains to show the claim. Suppose $\delta \in B(\beta_{p,q})$. Then we have
\begin{align}
\delta = \left( \prod_{i \in \ind(\delta)} \epsilon_{i} \right) \sigma_{1}\sigma_{2}\dots\sigma_{p+q}. \label{eq:delta}
\end{align}
Using the relation \eqref{eq:convert} and the transitivity, we can rewrite
\eqref{eq:delta} as
\[
\delta = \epsilon_{1}^{\#\ind(\delta)} \tau_{1}\tau_{2}\dots\tau_{p+q}
= \tau_{1}\tau_{2}\dots\tau_{p+q},
\]
where $\tau_i = \sigma_i$ or $\overline{\sigma_i}$ for all $i=1,2,\dots, p+q$.
Then $\F(\delta)$ has at least two elements $(\tau_1, \dots, \tau_n)$ and
$(\tau'_1, \dots, \tau'_n)$, the latter is defined by \eqref{eq:tau'}. Thus
$\#\F(\delta)\ge2$ and we are done.
\end{proof}

For example, let $\sigma = \left(~((1~2)),((2~5)),((2~3)),((4~5)),((3~4))~
\right)\in \F_{(3,2)}^+$ be the following factorization
\[
\beta_{3,2} = ((1~2~3))((4~5)) = ((1~2))~((2~5))~((2~3))~((4~5))~((3~4)).
\]
Since $\gamma_{3,2} = \epsilon_4 ~ \epsilon_1 ~ \beta_{3,2}$, we can obtain a
factorization of $\gamma_{3,2}$ from $\sigma$ as follows:
\begin{align*}
\gamma_{3,2} = [1~2~3][4~5]
& = \epsilon_4 ~ \epsilon_1~((1~2))~((2~5))~((2~3))~((4~5))~((3~4))\\
& = \epsilon_4 ~ \epsilon_2~\overline{((1~2))}~((2~5))~((2~3))~((4~5))~((3~4))\\
& = \epsilon_4 ~ \epsilon_3~((1~2))~\overline{((2~5))}~\overline{((2~3))}~((4~5))~((3~4))\\
& = \epsilon_4 ~ \epsilon_4~((1~2))~\overline{((2~5))}~((2~3))~\overline{((4~5))}~\overline{((3~4))}\\
& = ((1~2))~\overline{((2~5))}~((2~3))~\overline{((4~5))}~\overline{((3~4))}.
\end{align*}
Thus $\tau = \left(~
  ((1~2)),\overline{((2~5))},((2~3)),\overline{((4~5))},\overline{((3~4))} ~
\right)\in \F_{(3,2)}^{(B)}$ satisfies $\tau^+ = \sigma$. The factorization
$\tau' = \left(~((1~2)),((2~5)),((2~3)),\overline{((4~5))},((3~4))~ \right)$
obtained by toggling the connected transpositions of $\tau$ also satisfies
$\left( \tau' \right)^+ = \sigma$.

\section{Marked annular noncrossing permutations of type $A$}
\label{sec:mark-annul-noncr}

Mingo and Nica \cite{MN04} studied the set $\sa(p,q)=\{\pi\in \S_n: \pi\leq
\alpha_{p,q}\}$ of annular noncrossing permutations of type $A$. In contrast to
the type $B$ case, $\sa(p,q)$ is not isomorphic to the set $NC^A(p,q)$ of
annular noncrossing partitions of type $A$. In fact, the two sets
$\sa(p,q)$ and $NC^A(p,q)$ have different cardinalities, see \cite[Section~4]{MN04}.

In what follows we construct a poset whose maximal chains are in bijection with
minimal transitive factorizations of $\alpha_{p,q}$.

Recall that the \emph{(absolute) length} $\ell(\pi)$ of $\pi\in \S_n$ is defined
to be the smallest integer $k$ such that $\pi$ can be written as a product of
$k$ transpositions. Equivalently, $\ell(\pi) = n - \cycle(\pi)$, where
$\cycle(\pi)$ is the number of cycles in $\pi$. The \emph{(absolute) order}
$\pi\leq \sigma$ is defined if and only if $\ell(\sigma) = \ell(\pi) +
\ell(\pi^{-1}\sigma)$.  In this order, the interval $[\epsilon, \alpha_{p,q}]$
is isomorphic to $[\epsilon,(1,2,\dots,p)]\times [\epsilon,
(p+1,p+2,\dots,p+q)]$.

Similarly to the type $B$ case, we say that $\pi\in \S_{p+q}$ is
\emph{connected} if $\pi$ has a cycle intersecting with both $\{1,2,\dots,p\}$
and $\{p+1,p+2,\dots,p+q\}$, and \emph{disconnected} otherwise.

A \emph{marked annular noncrossing permutation of type $A$} is a pair $(\pi,z)$
of a permutation $\pi\in\S_{p+q}$ and an integer $z\in\{0,1\}$ such that
\begin{enumerate}
\item if $\pi$ is disconnected, then $\pi\leq \alpha_{p,q}$, i.e.
  $\ell(\alpha_{p,q}) = \ell(\pi) + \ell(\pi^{-1}\alpha_{p,q})$,
\item if $\pi$ is connected, then $\ell(\alpha_{p,q}) = \ell(\pi) +
  \ell(\pi^{-1}\alpha_{p,q})-2$,
\item if $z=1$, then $\pi$ is disconnected.
\end{enumerate}

We denote by $\msa(p,q)$ the set of marked annular noncrossing permutations of
type $A$. We define the partial order $\leq$ on $\msa(p,q)$ as
follows: $(\pi,z)\leq (\sigma,w)$ if and only if one of the
following holds:
\begin{enumerate}
\item $z=w$ and $\pi\leq\sigma$,
\item $z=0, w=1$, $\pi$ is connected, $\sigma$ is disconnected, and
  $\ell(\sigma) = \ell(\pi) + \ell(\pi^{-1}\sigma)-2$.
\end{enumerate}


Then $\msa(p,q)$ is a graded poset of rank $p+q$ with minimum $\hat0=(\epsilon,
0)$ and maximum $\hat1=(\alpha_{p,q},1)$. The rank function of $\msa(p,q)$ is
given by
\[
\rank(\pi,z)=\ell(\pi)+2z.
\]

We say that a multichain $(\pi_1,z_1)\leq(\pi_2,z_2)\leq\cdots\leq(\pi_{m},z_m)$
of $\msa(p,q)$ is \emph{connected} if it contains at least one connected
permutation, and \emph{disconnected} otherwise.

We now show the relation between the maximal chains of $\msa(p,q)$ and the
minimal transitive factorizations of $\alpha_{p,q}$.

\begin{prop}\label{prop:mtf}
  There is a bijection between the set of maximal chains of $\msa(p,q)$ and the
  set of minimal transitive factorizations of $\alpha_{p,q}$. Moreover, every
  maximal chain of $\msa(p,q)$ is connected.
\end{prop}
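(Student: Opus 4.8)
The plan is to describe a map $\Phi$ from maximal chains to sequences of transpositions, show it lands in $\F_{(p,q)}$, and invert it. Let $C=\{\hat0=(\pi_0,z_0)\lessdot(\pi_1,z_1)\lessdot\cdots\lessdot(\pi_{p+q},z_{p+q})=\hat1\}$ be a maximal chain. Since $\rank(\pi,z)=\ell(\pi)+2z$ and $z$ is weakly increasing along the chain, there is a unique index $k$ at which $z$ jumps from $0$ to $1$. Every other cover is an absolute-order cover $\pi_{i-1}\lessdot\pi_i$, so $\pi_i=\pi_{i-1}\eta_i$ for a transposition $\eta_i$ with $\ell(\pi_i)=\ell(\pi_{i-1})+1$, i.e.\ $\eta_i$ merges two cycles; and the jump cover $(\pi_k,0)\lessdot(\pi_{k+1},1)$ has, by the definition of the order and a rank count, $\pi_k$ connected, $\pi_{k+1}$ disconnected, and $\pi_{k+1}=\pi_k\eta_{k+1}$ for a transposition $\eta_{k+1}$ with $\ell(\pi_{k+1})=\ell(\pi_k)-1$, i.e.\ $\eta_{k+1}$ splits a cycle. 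Then $\Phi(C):=(\eta_1,\dots,\eta_{p+q})$ satisfies $\eta_1\cdots\eta_{p+q}=\pi_{p+q}=\alpha_{p,q}$, so condition~(1) of a minimal transitive factorization holds; and $C$ is recovered from $\Phi(C)$ via $\pi_i=\eta_1\cdots\eta_i$ and $z_i=0\iff\ell(\pi_i)=i$, so $\Phi$ is injective. The presence of the connected permutation $\pi_k$ in $C$ already proves that every maximal chain is connected.

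Next I would check transitivity (condition~(2)) via the graph $G$ on $\{1,\dots,p+q\}$ whose edges are the unordered pairs $\{\eta_1\},\dots,\{\eta_{p+q}\}$; it suffices to show $G$ is connected. The edges $\eta_1,\dots,\eta_k$ form a forest whose components are exactly the cycles of $\pi_k$ (each merge joins two components). Since $\pi_{k+1}$ is obtained by splitting one cycle of $\pi_k$ and is disconnected, that cycle is the unique cycle of $\pi_k$ meeting both blocks, and it splits into two cycles lying one in each block. The edge $\eta_{k+1}$ joins two vertices inside one component, creating the unique graph cycle and leaving the component count unchanged. Each remaining edge $\eta_i$ merges two cycles of $\pi_{i-1}$ with $\pi_{i-1}$ and $\pi_i$ disconnected; one checks that the only way two cycles of $\pi_{i-1}$ can sit in a common component of the current graph is for them to be the two block-halves descended from $\eta_{k+1}$, and merging those would create a cycle meeting both blocks, contradicting disconnectedness --- so each $\eta_i$ joins two distinct components. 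Hence the component count drops to $1$, $G$ is connected, $\{\eta_1,\dots,\eta_{p+q}\}$ generates $\S_{p+q}$, and $\Phi(C)\in\F_{(p,q)}$.

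For surjectivity I would reverse this. Given $(\eta_1,\dots,\eta_{p+q})\in\F_{(p,q)}$, set $\pi_i=\eta_1\cdots\eta_i$. Counting cycles ($p+q$ of them at $\pi_0$, two at $\pi_{p+q}=\alpha_{p,q}$, over $p+q$ steps) shows exactly one step $j^*$ splits a cycle and all others merge. One then checks that $\pi_{j^*-1}$ is connected and that $\pi_i$ is disconnected with $\pi_i\le\alpha_{p,q}$ for all $i\ge j^*$: if $\pi_{j^*-1}$ were disconnected then so would $\pi_{j^*}$ be, and since transitivity forces some later merge to join a block-$1$ cycle to a block-$2$ cycle, some $\pi_i$ with $i>j^*$ would become connected and would then stay connected under the subsequent merges, contradicting $\pi_{p+q}=\alpha_{p,q}$; meanwhile $\pi_i\le\alpha_{p,q}$ for $i\ge j^*$ follows by downward induction from $\pi_{p+q}=\alpha_{p,q}$ using $\pi_{i-1}\le\pi_i$ in the absolute order. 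Setting $z_i=0$ for $i<j^*$ and $z_i=1$ for $i\ge j^*$, the sequence $(\pi_i,z_i)$ is then a maximal chain of $\msa(p,q)$ with $\Phi$-image $(\eta_1,\dots,\eta_{p+q})$, and by injectivity of $\Phi$ it is the unique preimage; this establishes the bijection.

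The step I expect to be the main obstacle is verifying, in the surjectivity direction, that each prefix pair $(\pi_i,z_i)$ actually belongs to $\msa(p,q)$ --- that a prefix $\pi_i$ of a minimal transitive factorization of $\alpha_{p,q}$ automatically obeys the defining inequality of a marked annular noncrossing permutation: $\pi_i\le\alpha_{p,q}$ when $\pi_i$ is disconnected, and $\ell(\alpha_{p,q})=\ell(\pi_i)+\ell(\pi_i^{-1}\alpha_{p,q})-2$ when $\pi_i$ is connected. For $i\ge j^*$ this was handled above, so the real content is the range $i<j^*$, where $\ell(\pi_i)=i$ and $\pi_i^{-1}\alpha_{p,q}=\eta_{i+1}\cdots\eta_{p+q}$ is a product of $p+q-i$ transpositions; I would show that the deficiency $\ell(\pi_i)+\ell(\pi_i^{-1}\alpha_{p,q})-\ell(\alpha_{p,q})$ equals $0$ or $2$ according as $\pi_i$ is disconnected or connected, and that in the disconnected case $\pi_i$ genuinely lies below $\alpha_{p,q}$ --- here one uses the decomposition $[\epsilon,\alpha_{p,q}]\cong[\epsilon,(1\,2\cdots p)]\times[\epsilon,(p{+}1\cdots p{+}q)]$ and, crucially, that transitivity forbids a prefix from being a disconnected permutation whose two block parts fail to be noncrossing. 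I would prove these by a length/genus bookkeeping along the factorization, tracking how the single split step accounts for the deficiency.
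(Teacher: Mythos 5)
Your construction of the map from maximal chains to transposition sequences is the same as the paper's: locate the unique index where the mark $z$ jumps, read off $\eta_i=\pi_{i-1}^{-1}\pi_i$, and use the rank function $\ell(\pi)+2z$ to see that every non-jump cover contributes a merging transposition while the jump cover contributes a splitting transposition applied to a connected permutation --- which is also exactly how the paper gets the ``every maximal chain is connected'' claim. The paper then simply asserts that this map ``is a desired bijection''; you go further by actually verifying transitivity of the image via the component count of the transposition graph and by spelling out injectivity, and both of those arguments are correct. The only step you leave as a plan is the genuine crux of surjectivity: that for $i<j^*$ a \emph{disconnected} prefix $\pi_i$ really satisfies $\pi_i\le\alpha_{p,q}$ (your parity/subadditivity observation already pins the deficiency to $0$ or $2$, and connectedness of $\pi_i$ forces $2$, so this is the only case left open). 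Your sketched bookkeeping does close it, and here is the one-line way: if $\pi_i$ were disconnected with deficiency $2$, then $\ell(\pi_i^{-1}\alpha_{p,q})=p+q-i$, so $\eta_{i+1}\cdots\eta_{p+q}$ would be a \emph{reduced} factorization of $\mu:=\pi_i^{-1}\alpha_{p,q}$; since $\pi_i$ and $\alpha_{p,q}$ both preserve the blocks $\{1,\dots,p\}$ and $\{p+1,\dots,p+q\}$, so does $\mu$, and every transposition in a reduced factorization has its support inside a single cycle of the product, so $\eta_{i+1},\dots,\eta_{p+q}$ would all be block-preserving; the same holds for $\eta_1,\dots,\eta_i$, whose supports lie inside cycles of the disconnected $\pi_i$, contradicting transitivity. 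With that paragraph inserted your proof is complete and is in substance the same as (but considerably more detailed than) the paper's.
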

\begin{proof}
  Let $(\epsilon,0) = (\pi_0,z_0) < (\pi_1,z_1) < \cdots < (\pi_{p+q},z_{p+q}) =
  (\alpha_{p,q},1)$ be a maximal chain in $\msa(p,q)$. By definition of the
  partial order on $\msa(p,q)$ there is a unique integer $k$ such that
  $z_0=z_1=\cdots = z_{k-1} = 0$ and $z_{k}=z_{k+1}=\cdots=z_{p+q}=1$.

  Suppose $i\in\{1,2,\dots,p+q\}\setminus\{k\}$.  We have
  $\ell(\pi_{i})=\ell(\pi_{i-1}) +\ell(\pi_{i-1}^{-1}\pi_{i})$. Since
  $z_i=z_{i-1}$ and
\[
\ell(\pi_{i})+2z_i=\rank(\pi_{i},z_{i})=\rank(\pi_{i-1},z_{i-1})+1=\ell(\pi_{i-1})+2z_{i-1}+1,
\]
we get $\ell(\pi_{i}) = \ell(\pi_{i-1})+1$. Thus $\ell(\pi_{i-1}^{-1}\pi_{i})=1$
and $t_i = \pi_{i-1}^{-1}\pi_{i}$ is a transposition. Furthermore, since $\pi_k,
\pi_{k+1},\dots,\pi_{p+q}$ are disconnected, so are
$t_{k+1},t_{k+2},\dots,t_{p+q}$.


On the other hand, we have $\ell(\pi_{k}) = \ell(\pi_{k-1}) +
\ell(\pi_{k-1}^{-1}\pi_{k}) - 2$ and $\ell(\pi_{k}) +2=\rank(\pi_{k},z_{k}) =
\rank(\pi_{k-1},z_{k-1})+1 = \ell(\pi_{k-1})+1$.  Thus
$\ell(\pi_k^{-1}\pi_{k+1})=1$ and $t_k = \pi_k^{-1}\pi_{k+1}$ is a connected
transposition. In particular, $t_k$ is the last connected transposition in
$t_1,t_2,\dots,t_{p+q}$. It is easy to see that the map sending the maximal
chain to $(t_1,t_2,\dots,t_{p+q})$ is a desired bijection.
\end{proof}




\begin{prop}\cite[Proposition~5.1]{GNO11}\label{prop:B}
  For positive integers $p,q,m$, there is a bijection between the set of tuples
  $(c,d; L^E, R_1^E,\dots,R_m^E; L^I,R_1^I,\dots,R_m^I)$ satisfying $c\geq1$,
  $1\le d\le 2c$, and
  \begin{equation}
    \label{eq:cond1}
 L^E, R_1^E,\dots,R_m^E\subseteq\{1,2,\dots,p\},\quad |L^E|=|R_1^E|+\cdots+|R_m^E|+c,
  \end{equation}
  \begin{equation}
    \label{eq:cond2}
    L^I, R_1^I,\dots,R_m^I\subseteq\{p+1,p+2,\dots,p+q\},\quad
    |L^I|=|R_1^I|+\cdots+|R_m^I|-c,
  \end{equation}
  and the set of connected multichains $\pi_1\leq\pi_2\leq\cdots\leq\pi_{m}$ in
  $\sb(p,q)$ such that
\[
\rank(\pi_i) = p+q- \left(
|R_i^E|+\cdots+|R_m^E|+|R_i^I|+\cdots+|R_m^I|
\right), \quad 1\leq i\leq m.
\]
\end{prop}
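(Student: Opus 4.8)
The plan is to prove this by peeling off, from each element of the multichain, its behaviour on the outer points $\{\pm 1,\dots,\pm p\}$ and on the inner points $\{\pm(p+1),\dots,\pm(p+q)\}$, and to encode each of the two resulting ``disk-type'' structures by generalized parking-function data; the parameters $c$ and $d$ will record the single cycle that couples the two sides. Via the poset isomorphism $\sb(p,q)\simeq NC^{(B)}(p,q)$ of \cite{NO09}, this is exactly \cite[Proposition~5.1]{GNO11}, so one option is simply to quote it; below I indicate the self-contained argument I would give.

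First I would fix the basic structure. From the annular picture of \cite{NO09}, a connected $\sigma\in\sb(p,q)$ has a distinguished connected paired nonzero cycle $\kappa_\sigma$, and deleting it leaves a permutation that is noncrossing on a subset of the outer disk together with one that is noncrossing on a subset of the inner disk; a disconnected $\sigma$ is such a pair with no coupling cycle. Using this, I would attach to the given connected multichain $\pi_1\le\cdots\le\pi_m$ a noncrossing multichain on the outer side (living in $[\epsilon,\gamma_p]$) and one on the inner side (living in $[\epsilon,\gamma_q]$), keeping track of the levels at which the coupling cycle is present. Via the standard encoding of type $B$ noncrossing multichains (as in \cite{Rei97}), the outer multichain corresponds to subsets $R^E_1,\dots,R^E_m\subseteq\{1,\dots,p\}$, where $R^E_i$ records the outer points whose cycles are merged during the $i$-th jump, together with a subset $L^E$ recording the starting configuration, and similarly for the inner side. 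With this in hand the rank identity
\[
\rank(\pi_i)=p+q-\bigl(|R^E_i|+\cdots+|R^E_m|+|R^I_i|+\cdots+|R^I_m|\bigr)
\]
would follow from \eqref{eq:rankB}, since by the classification of cover relations in \cite{NO09} (recalled in the proof of Lemma~\ref{lem:linked}) every cover step lowers the number of paired nonzero cycles by exactly one, and the number of such steps in the $i$-th jump is $|R^E_i|+|R^I_i|$.

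The heart of the argument is the treatment of the coupling cycle $\kappa:=\kappa_{\pi_m}$ (for a connected multichain it is $\pi_m$, not $\gamma_{p,q}$, that carries it). Drawn without crossings in the annulus, $\kappa$ visits the outer and inner circles in an alternating sequence of runs; I would set $2c$ to be the number of these runs, so that $c\ge 1$, and let $d\in\{1,\dots,2c\}$ encode the rotational position of $\kappa$ relative to the basepoint $1$ on the outer circle — data that the unordered outer and inner traces alone cannot recover. One then checks that, compared with a purely disconnected multichain having the same jump sizes, the presence of $\kappa$ forces exactly $c$ additional outer merges and $c$ fewer inner merges, which produces the imbalances $|L^E|=|R^E_1|+\cdots+|R^E_m|+c$ and $|L^I|=|R^I_1|+\cdots+|R^I_m|-c$ in \eqref{eq:cond1} and \eqref{eq:cond2}.

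Finally I would construct the inverse map: from a tuple $(c,d;L^E,R^E_1,\dots,R^E_m;L^I,R^I_1,\dots,R^I_m)$, reassemble the two disk multichains, splice in a connected cycle of ``shape $(c,d)$'' winding through the annulus, and verify at each level that the result lies in $[\epsilon,\gamma_{p,q}]$ and is genuinely annular-noncrossing. I expect this last verification, together with the precise definitions of $c$ and $d$, to be the main obstacle: one must show that every admissible parameter tuple yields a valid annular-noncrossing multichain and that distinct tuples yield distinct multichains, and it is precisely in this step that the factor $2c$ — rather than $c$ — enters. This is the technical core of \cite[Proposition~5.1]{GNO11}, to which I would defer for the remaining bookkeeping.
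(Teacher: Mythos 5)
The paper gives no proof of Proposition~\ref{prop:B}: it is stated with the citation \cite[Proposition~5.1]{GNO11} and imported as a known result. Your first option --- simply quoting it --- is therefore exactly what the authors do, and is all that is required.

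The self-contained sketch you append, however, should not be mistaken for a proof, and it contains two concrete inaccuracies. First, a connected element of $\sb(p,q)$ may have \emph{several} connected paired nonzero cycles (the paper defines the ``connectivity'' of $\sigma$ as the number of such cycles, and it can exceed $1$), so ``the distinguished connected cycle $\kappa_\sigma$'' is not well defined, and deleting one connected cycle need not leave a pair of disjoint disk pictures. Second, your proposed definitions of $c$ (half the number of outer/inner runs of a single coupling cycle) and $d$ (a rotational position of that cycle) do not match the actual parameters. The correct definitions can be read off the paper's own proof of the type $A$ analogue, Proposition~\ref{prop:A}: one records, for every cycle of every $\pi_i$, its first and last elements relative to basepoints $a$ and $b$, builds a balanced parenthesization of the resulting sequence, deletes the inner symbols, and lets $c$ be the number of unmatched left parentheses and $d$ the index of $a$ among the unmatched positions $j_1<\cdots<j_c$. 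In particular, $c$ is a global statistic of the entire multichain, not of one cycle of $\pi_m$, and the range $1\le d\le 2c$ in type $B$ (versus $1\le d\le c$ in type $A$) arises from the sign ambiguity in choosing the first element of a paired cycle --- a point the paper discusses explicitly just before Proposition~\ref{prop:A} --- not from a count of runs. Since you defer the technical core to \cite{GNO11} in any case, the citation carries your argument; the sketch as written would not.
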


We now prove a type $A$ analog of Proposition~\ref{prop:B}. Our proof for the
type $A$ analog is almost the same as the proof of Proposition~5.1 in
\cite{GNO11}. The only difference (except the obvious difference caused by sign)
is that for the type $A$ case we have to determine the first and the last
elements of a cycle.  More precisely, for a cycle of the form
$((a_1,\dots,a_k))$ in $\sb(p,q)$ whose elements are contained in
$\{\pm1,\pm2,\dots,\pm p\}$, there is a unique way to write the cycle as
$((b_1,\dots,b_k))$ such that $b_1,\dots,b_k,-b_1,\dots,-b_k$ are in the same
cyclic order as the subsequence $1,2,\dots,p,-1,-2,\dots,-p$ consisting of $\pm
a_1,\pm a_2,\dots,\pm a_k$. Thus we can naturally say that $b_1$ (or $-b_1$) is
the first element and $b_k$ (or $-b_k$) is the last element of the cycle. For
instance, consider the cycle $((1,2,-4))$ in, say, $\sb(4,3)$. Then $((-4,1,2))$
is the only way so that $-4,1,2,4,-1,-2$ are in the same cyclic order as the
subsequence $1,2,4,-1,-2,-4$ of $1,2,3,4,-1,-2,-3,-4$. If we write the cycle as
$((1,2,-4))$, the sequence $1,2,-4,-1,-2,4$ is not in the same cyclic order as
$1,2,4,-1,-2,-4$. However, for the cycle $(1,2,4)$ in $\msa(4,3)$, all three
cyclic shifts of $1,2,4$ are, of course, in the same cyclic order as the
subsequence $1,2,4$ of $1,2,3,4$.

\begin{prop}\label{prop:A}
  For positive integers $p$, $q$, and $m$, there is a bijection between the set of tuples
  $(c,d; L^E, R_1^E,\dots,R_m^E; L^I,R_1^I,\dots,R_m^I)$ satisfying $c\geq1$,
  $1\le d\le c$, \eqref{eq:cond1}, and \eqref{eq:cond2}, and the set of
  connected multichains $(\pi_1,z_1)\leq(\pi_2,z_2)\leq\cdots\leq(\pi_{m},z_m)$
  in $\msa(p,q)$ such that
  \begin{equation}
    \label{eq:rankA}
\rank(\pi_i,z_i) = p+q- \left(
|R_i^E|+\cdots+|R_m^E|+|R_i^I|+\cdots+|R_m^I|
\right), \quad 1\leq i\leq m.
  \end{equation}
\end{prop}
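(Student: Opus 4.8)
The plan is to imitate the proof of \cite[Proposition~5.1]{GNO11} (that is, Proposition~\ref{prop:B}) almost verbatim, changing only what the passage from type $B$ to type $A$ forces. I would spell out the map from a connected multichain to a tuple; the inverse map and the check that the two maps are mutually inverse then go through exactly as in \cite{GNO11}.

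So let $(\pi_1,z_1)\leq\cdots\leq(\pi_m,z_m)$ be a connected multichain in $\msa(p,q)$ satisfying \eqref{eq:rankA}. First I would record the unique index $k$ with $z_1=\cdots=z_{k-1}=0$ and $z_k=\cdots=z_m=1$ (here $k\geq2$, and $k=m+1$ if every $z_i=0$). Because the order on $\msa(p,q)$ only merges cycles within each of the two ``worlds'' $z=0$ and $z=1$, and because the step from the $z=0$ part to the $z=1$ part occurs at a connected permutation $\pi_{k-1}$, the multichain has a well-defined, nonempty ``connected stretch'': a block of consecutive indices ending at $k-1$ on which $\pi_i$ has exactly one connected cycle, with $\pi_i$ disconnected outside the block. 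Following \cite{GNO11}, I would then read the sets $L^E,L^I$ and $R_1^E,\dots,R_m^E,R_1^I,\dots,R_m^I$ off the cycle structures: reading up the chain, each cycle of $\pi_i$ that is not already a cycle of $\pi_{i-1}$ contributes its first element to $R_i^E$ or to $R_i^I$ according to which circle it meets, while $L^E,L^I$ collect the first elements of the cycles of $\pi_1$. For a disconnected cycle, whose support $S$ lies entirely in $\{1,\dots,p\}$ or entirely in $\{p+1,\dots,p+q\}$, ``first element'' is unambiguous: we use the writing that starts at $\min S$, which agrees with the writings $(1 \dots p)$ and $(p+1 \dots p+q)$ of the two cycles of $\alpha_{p,q}$ and is preserved by noncrossing refinement. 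Up to here the construction is simply the type $A$ counterpart of the type $B$ one.

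The one genuinely new point---exactly the issue flagged in the paragraph preceding the statement---is the unique connected cycle. On the annulus it wraps around the inner circle, so, unlike a cycle of $\sb(p,q)$ supported in $\{\pm1,\dots,\pm p\}$ (which has a canonical linear writing induced by the order $1,\dots,p,-1,\dots,-p$), it has no distinguished first element and a rooting must be supplied. The integer $c$, which by \eqref{eq:cond1}--\eqref{eq:cond2} equals $|L^E|-\sum_i|R_i^E|=\sum_i|R_i^I|-|L^I|$, measures the imbalance of this connected cycle between the two circles, and the rotational freedom in choosing its first element turns out to be exactly $c$-fold; the label $d\in\{1,\dots,c\}$ records that choice. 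This is precisely where type $A$ parts from type $B$: in \cite{GNO11} the analogous rooting has $2c$ admissible positions, the factor two coming from the sign symmetry $x\mapsto-x$ of the type $B$ annulus. With these conventions one verifies, exactly as in \cite{GNO11}, that \eqref{eq:rankA} is equivalent to \eqref{eq:cond1} and \eqref{eq:cond2} together with $c\geq1$: the rank identity just keeps track of the total size $\sum_{j\geq i}(|R_j^E|+|R_j^I|)$ of the not-yet-merged part at level $i$, while the additive constants $\pm c$ and the offset $2z_i$ in $\rank(\pi_i,z_i)=\ell(\pi_i)+2z_i$ are accounted for by the connected cycle and by the location of $k$, and $c\geq1$ corresponds to connectivity of the multichain.

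Finally I would construct the inverse map. Given a tuple $(c,d;L^E,R_1^E,\dots,R_m^E;L^I,R_1^I,\dots,R_m^I)$ with $c\geq1$, $1\leq d\leq c$, \eqref{eq:cond1} and \eqref{eq:cond2}, one rebuilds $(\pi_m,z_m),\dots,(\pi_1,z_1)$ from the top down, the sets $R_i^E,R_i^I$ prescribing which elements are released into new cycles at step $i$; at the step where the connected cycle appears, $(c,d)$ determines both its shape on the annulus and its base point, the noncrossing condition on $\msa(p,q)$ then forces all remaining choices (this is the analogue of the corresponding step in \cite{GNO11}, with the $\min$-writing convention replacing the $\pm$-writing convention), and $z_i$ is fixed by whether $\pi_i$ lies before or after the connected stretch. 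The two maps are mutually inverse by construction. I expect the main obstacle---the only place needing genuine care beyond transcribing \cite{GNO11}---to be the verification that the rooting of the connected cycle has exactly $c$ admissible positions rather than $2c$, and, in tandem with this, the check that the marker $z$ and the distinguished index $k$ are recorded so that $\rank(\pi_i,z_i)=\ell(\pi_i)+2z_i$ remains compatible with \eqref{eq:rankA} for every $i$.
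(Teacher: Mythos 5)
Your high-level plan --- transcribe the proof of \cite[Proposition~5.1]{GNO11}, the genuinely new content being the rooting of cycles and the replacement of $2c$ by $c$ admissible values of $d$ --- is the same as the paper's, and your structural observations about the chain (the $z$-values switch once from $0$ to $1$; the connected permutations form a contiguous block ending at the last $z=0$ level) are correct. However, the construction you actually describe breaks down at exactly the point you flag as the crux. A connected permutation in $\msa(p,q)$ can have \emph{several} connected cycles: for instance $(1\,3)(2\,4)$ satisfies $\ell(\alpha_{2,2})=\ell(\pi)+\ell(\pi^{-1}\alpha_{2,2})-2$ and occurs in maximal chains of $\msa(2,2)$ such as $\epsilon<(1\,3)<(1\,3)(2\,4)<\cdots$. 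So there is no ``unique connected cycle'' whose $c$ rotations the label $d$ can record, and your definition of $d$ has no meaning in general. The paper instead takes, at the \emph{largest} connected level $\pi_k$, the connected cycle $C_{\max}$ containing the largest element, extracts from it two base points $a\in\{1,\dots,p\}$ and $b\in\{p+1,\dots,p+q\}$ which root the outer and inner circles, and only afterwards defines $d$ globally as the position of $a$ among the $c$ unmatched left parentheses of a parenthesization of the rooted word; $d$ is not a rotation parameter of a single cycle.

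The encoding of the sets is also not the one that works. The paper (following \cite{GNO11}) puts \emph{first} elements of cycles --- with respect to the cyclic orders rooted at $a$ and at $b+1$, not with respect to $\min$ --- into $L^E,L^I$ and \emph{last} elements into the $R_i^E,R_i^I$, with the explicit rule that a connected cycle $(g_1,\dots,g_u,h_1,\dots,h_v)$ contributes $g_1$ to $L^E$ and $h_v$ to $R_i^I$; this asymmetric treatment of the two circles is precisely what forces $|L^E|-\sum_i|R_i^E|=\sum_i|R_i^I|-|L^I|=c\ge 1$ for a connected chain. Your version (first elements of ``new'' cycles into $R_i$, first elements of the cycles of $\pi_1$ into $L$, assignment ``according to which circle it meets'') does not say what to do with a connected cycle, does not produce sets of the cardinalities required by \eqref{eq:cond1}, \eqref{eq:cond2} and \eqref{eq:rankA}, and, lacking the $(a,b)$-rooting and the parenthesis-matching, gives no way to define $c$ and $d$ or to invert the map. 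These are not transcription details that ``go through exactly as in \cite{GNO11}''; they are the content of the proposition.
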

\begin{proof}
  Consider a connected multichain
  $(\pi_1,z_1)\leq(\pi_2,z_2)\leq\cdots\leq(\pi_{m},z_m)$ in $\msa(p,q)$. We
  will define the corresponding tuple $(c,d; L^E, R_1^E,\dots,R_m^E;
  L^I,R_1^I,\dots,R_m^I)$ according to the following steps.

  \emph{Step 1.} We first determine two integers $a\in\{1,2,\dots,p\}$ and
  $b\in\{p+1,\dots,p+q\}$.  Let $k$ be the largest index such that $\pi_k$ is
  connected. Then $\pi_k$ has one or more connected cycles. Take the connected
  cycle $C_{\max}$ of $\pi_k$ with largest element. Since $C_{\max}$ is
  connected, it can be uniquely written as $C_{\max}=(a_1,\dots, a_r,
  b_1,\dots,b_s)$, where $1\le a_1,\dots,a_r \le p$ and $p+1\le b_1,\dots,b_s\le
  p+q$. We let $a=a_1$ and $b=b_s$.

  \emph{Step 2.} To each $(\pi_i,z_i)$ we associate a tuple $(L_i^E,R_i^E;
  L_i^I, R_i^I)$ as follows. First, we set $L_i^E=R_i^E=L_i^I=R_i^I=\emptyset$,
  and for every cycle $C$ of $(\pi_i,z_i)$ we do the following. If $C$ is
  contained in $\{1,2,\dots,p\}$, add to $L_i^E$ (resp.~$R_i^E$) the element of
  $C$ that appears first (resp.~last) in the sequence
  $a,a+1,\dots,p,1,2,\dots,a-1$. If $C$ is contained in $\{p+1,p+2,\dots,p+q\}$,
  add to $L_i^I$ (resp.~$R_i^I$) the element of $C$ that appears first
  (resp.~last) in the sequence $b+1,b+2,\dots,p+q,p+1,p+2,\dots,b$. If $C$ is a
  connected cycle, it can be uniquely written as $C=(g_1,\dots, g_u,
  h_1,\dots,h_v)$, where $1\le g_1,\dots,g_u \le p$ and $p+1\le h_1,\dots,h_v\le
  p+q$. In this case we add $g_1$ to $L_i^E$ and $h_v$ to $R_i^I$.

  \emph{Step 3.} Let $L^E = L_1^E\cup \cdots \cup L_m^E$ and $L^I = L_1^I\cup
  \cdots \cup L_m^I$. Now consider the sequence
  \begin{equation}
    \label{eq:seq}
a,a+1,\dots,p,1,2,\dots,a-1, b+1,b+2,\dots,p+q,p+1,p+2,\dots,b
  \end{equation}
  with parenthesization obtained by placing a left parenthesis before every
  integer in $L^E\cup L^I$, a right parenthesis labeled $i$ after every integer
  in $R_i^E\cup R_i^I$ for all $i=1,2,\dots,m$. There may be more than one right
  parenthesis after one integer. In this case the right parentheses are placed
  in the increasing order of their labels. By the construction it is clear that
  the parenthesization is balanced. Now remove the integers larger than $p$ and
  their left and right parentheses in \eqref{eq:seq}. Then we have more left
  parentheses than right parentheses. Let $c$ be the number of left parentheses
  minus the number of right parentheses. Then there are exactly $c$ unmatched
  left parentheses. Let $j_1<j_2<\dots<j_c$ be the integers whose left
  parentheses are unmatched. Note that the left parenthesis of $a$ is unmatched
  because it was matched with a right parenthesis of $b$ before removing the
  numbers with parentheses. We define $d$ to be the index with $j_d=a$. We
  clearly have $1\le d\le c$.

  Then the map sending the multichain to $(c,d; L^E, R_1^E,\dots,R_m^E;
  L^I,R_1^I,\dots,R_m^I)$ is a desired bijection. The inverse map can be
  obtained in the same way as in the proof Proposition~5.1 in \cite{GNO11}.
\end{proof}

\begin{cor}
  The number of maximal chains in $\msa(p,q)$ is equal to
\[
\sum_{c \ge 1} c\binom{p+q}{p-c} p^{p-c} q^{q+c}.
\]
\end{cor}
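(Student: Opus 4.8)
The plan is to derive the corollary by specializing Proposition~\ref{prop:A} to the case $m=p+q$. First I would note that, since $\msa(p,q)$ is a graded poset of rank $p+q$ with minimum $\hat0=(\epsilon,0)$ and maximum $\hat1=(\alpha_{p,q},1)$, dropping the minimum identifies a maximal chain of $\msa(p,q)$ with a multichain $(\pi_1,z_1)\le(\pi_2,z_2)\le\cdots\le(\pi_{p+q},z_{p+q})$ satisfying $\rank(\pi_i,z_i)=i$ for $1\le i\le p+q$; by Proposition~\ref{prop:mtf} every such chain is connected, so Proposition~\ref{prop:A} applies with $m=p+q$ and the number of maximal chains of $\msa(p,q)$ equals the number of tuples $(c,d;L^E,R_1^E,\dots,R_{p+q}^E;L^I,R_1^I,\dots,R_{p+q}^I)$ obeying $c\ge1$, $1\le d\le c$, \eqref{eq:cond1}, \eqref{eq:cond2}, and the rank condition \eqref{eq:rankA} with $\rank(\pi_i,z_i)=i$.

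Next I would unwind that rank condition. Written out, \eqref{eq:rankA} becomes $\sum_{j=i}^{p+q}\bigl(|R_j^E|+|R_j^I|\bigr)=p+q-i$ for all $i$; taking successive differences forces $|R_i^E|+|R_i^I|=1$ for $1\le i\le p+q-1$ and $R_{p+q}^E=R_{p+q}^I=\emptyset$. So each $R_i$ ($1\le i\le p+q-1$), where $R_i:=R_i^E\cup R_i^I$, is a singleton that is either \emph{external} (lying in $\{1,\dots,p\}$) or \emph{internal} (lying in $\{p+1,\dots,p+q\}$). Let $E\subseteq\{1,\dots,p+q-1\}$ be the set of external indices and put $r^E=|E|$, so the number of internal indices is $p+q-1-r^E$.

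The third and main step is the enumeration. The index $d$ ranges freely over $\{1,\dots,c\}$, so for each fixed $c$ it contributes a factor $c$ and it remains to count the set-data $(L^E,R_\bullet^E,L^I,R_\bullet^I)$. From \eqref{eq:cond1} we need $|L^E|=r^E+c\le p$, and from \eqref{eq:cond2} we need $|L^I|=p+q-1-r^E-c\in\{0,1,\dots,q\}$; together these pin $r^E$ down to the two values $p-c$ and $p-1-c$ (equivalently, $L^E$ is forced to be all of $\{1,\dots,p\}$, or $L^I$ is forced to be all of $\{p+1,\dots,p+q\}$). For $r^E=p-c$ one chooses $E$ in $\binom{p+q-1}{p-c}$ ways, fills the external $R_i$'s in $p^{p-c}$ ways and the internal ones in $q^{q+c-1}$ ways, and picks $L^I$ (of size $q-1$) in $q$ ways, giving $\binom{p+q-1}{p-c}p^{p-c}q^{q+c}$; for $r^E=p-1-c$ one gets analogously $\binom{p+q-1}{p-1-c}p^{p-c}q^{q+c}$, the factor $p$ now coming from the choice of $L^E$ (of size $p-1$). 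Adding these and using Pascal's identity gives $\binom{p+q}{p-c}p^{p-c}q^{q+c}$ tuples for each $c$, and summing over $c\ge1$ (the terms with $c>p$ vanishing since then $\binom{p+q}{p-c}=0$) yields $\sum_{c\ge1}c\binom{p+q}{p-c}p^{p-c}q^{q+c}$, as claimed.

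I do not expect a serious obstacle: once the rank condition is imposed the only subtlety is recognizing which pieces of the data remain free and that the size constraints force $r^E$ into just two values; after that it is a short two-case count closed by Pascal's rule. This argument is the exact type~$A$ counterpart of the derivation of \eqref{eq:nc-formula} from Proposition~\ref{prop:B} in \cite{GNO11}; the absence of the summand $\binom{p+q}{q}p^pq^q$ reflects that $\msa(p,q)$ has no disconnected maximal chains, and the factor $c$ in place of $2c$ reflects the constraint $1\le d\le c$ in Proposition~\ref{prop:A} versus $1\le d\le 2c$ in Proposition~\ref{prop:B}.
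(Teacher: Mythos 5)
Your proof is correct and follows essentially the same route as the paper: apply Proposition~\ref{prop:A} to a maximal chain viewed as a multichain with prescribed ranks, note connectedness via Proposition~\ref{prop:mtf}, and count the resulting tuples. The only difference is bookkeeping: the paper keeps the bottom element $(\pi_0,z_0)=(\epsilon,0)$ in the multichain (so the $R$'s are indexed $0,\dots,p+q$ and there are $p+q$ singletons), which forces $L^E=\{1,\dots,p\}$ and $L^I=\{p+1,\dots,p+q\}$ outright and yields $\binom{p+q}{p-c}p^{p-c}q^{q+c}$ in a single product, whereas your choice to drop the minimum leaves $p+q-1$ singletons and requires the two-case analysis closed by Pascal's identity.
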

\begin{proof}
  Let $M= \{(\pi_0,z_0) < (\pi_1,z_1) < \cdots < (\pi_{p+q},z_{p+q})\}$ be a
  maximal chain in $\msa(p,q)$.  By Proposition~\ref{prop:mtf}, $M$ is
  connected.  Let
  \begin{equation}
    \label{eq:5}
(c,d; L^E, R_0^E, R_1^E,\dots,R_{p+q}^E; L^I,R_0^I,R_1^I,\dots,R_{p+q}^I)
  \end{equation}
be the tuple corresponding to $M$ under the map in Proposition~\ref{prop:A}.
Since $M$ is maximal, using the conditions~\eqref{eq:cond1}, \eqref{eq:cond2},
and \eqref{eq:rankA}, we have
\[
|R_i^E|+|R_i^I|=1,  \quad i=0,1,\dots,p+q-1,
\]
\[
R_{p+q}^E=R_{p+q}^I=\emptyset, \quad
L^E=\{1,2,\dots,p\}, \quad L^I=\{p+1,p+2,\dots,p+q\},
\]
\[
|R_0^E|+\cdots+|R_{p+q-1}^E| = p-c, \quad
|R_0^I|+\cdots+|R_{p+q-1}^I| = q+c,
\]
It is now easy to see that for fixed $c$, the number of choices for the tuples
\eqref{eq:5} satisfying the above conditions is $\binom{p+q}{p-c} p^{p-c}
q^{q+c}$.
\end{proof}

\begin{rmk}
  Using Propositions~\ref{prop:B} and \ref{prop:A} we obtain a 2-1 map between
  the sets of connected multichains in $\sb(p,q)$ and in $\msa(p,q)$. It is not
  difficult to check that this 2-1 map is essentially the same as the 2-1 map in
  the previous section. Note also that by Propositions~\ref{prop:B} and
  \ref{prop:A} there is a 2-1 map from the set of connected permutations in
  $\sb(p,q)$ to the set of connected permutations in $\msa(p,q)$. However, we
  have $|\sb(p,q)|\ne 2|\msa(p,q)|$ because the numbers of disconnected
  permutations in $\sb(p,q)$ and in $\msa(p,q)$ are equal to
  $\binom{2p}{p}\binom{2q}{q}$ and $2 C_p C_q$ respectively, where $C_n =
  \frac{1}{n+1}\binom{2n}{n}$.
\end{rmk}

%
%
%

\section*{Acknowledgment}

We thank Christian Krattenthaler for the proof of Lemma~\ref{lem:kk}.
For to the second author, this research was supported by Basic Science Research Program through the National Research Foundation of Korea (NRF) funded by the Ministry of Education, Science
and Technology(2011-0008683). For the third author, this
work was supported by INHA UNIVERSITY Research Grant (INHA-44756).




\begin{thebibliography}{GNO11}

\bibitem[Bia02]{Bia00}
P.~Biane.
\newblock Parking functions of types {A} and {B}.
\newblock {\em Electron. J. Combin.}, 9(1):Note 7, 5 pp. (electronic), 2002.

\bibitem[BMS00]{B-M2000}
Mireille Bousquet-M\'elou and Gilles Schaeffer.
\newblock Enumeration of planar constellations.
\newblock {\em Advances in Applied Mathematics}, 24:337--368, 2000.

\bibitem[GJ97]{GJ97}
I.~P. Goulden and D.~M. Jackson.
\newblock Transitive factorisations into transpositions and holomorphic
  mappings on the sphere.
\newblock {\em Proc. Amer. Math. Soc.}, 125(1):51--60, 1997.

\bibitem[GNO11]{GNO11}
I.~Goulden, Alexandru Nica, and Ion Oancea.
\newblock Enumerative properties of ${NC}^{(B)}(p,q)$.
\newblock {\em Annals of Combinatorics}, 15:277--303, 2011.
\newblock 10.1007/s00026-011-0095-4.

\bibitem[Gou95]{Gou95}
Alain Goupil.
\newblock Reflection decompositions in the classical {W}eyl groups.
\newblock {\em Discrete Math.}, 137(1-3):195--209, 1995.

\bibitem[GY02]{GY02}
Ian Goulden and Alexander Yong.
\newblock Tree-like properties of cycle factorizations.
\newblock {\em J. Combin. Theory Ser. A}, 98(1):106--117, 2002.

\bibitem[Hur91]{Hur91}
A.~Hurwitz.
\newblock Ueber {R}iemann'sche {F}l\"achen mit gegebenen {V}erzweigungspunkten.
\newblock {\em Math. Ann.}, 39(1):1--60, 1891.

\bibitem[Irv09]{Irv09}
John Irving.
\newblock Minimal transitive factorizations of permutations into cycles.
\newblock {\em Canad. J. Math.}, 61(5):1092--1117, 2009.

\bibitem[Kra]{Kra}
Christian Krattenthaler.
\newblock Personal communication.

\bibitem[KS03]{KS03}
Dongsu Kim and Seunghyun Seo.
\newblock Transitive cycle factorizations and prime parking functions.
\newblock {\em J. Combin. Theory Ser. A}, 104(1):125--135, 2003.

\bibitem[MN04]{MN04}
James~A. Mingo and Alexandru Nica.
\newblock Annular noncrossing permutations and partitions, and second-order
  asymptotics for random matrices.
\newblock {\em Int. Math. Res. Not.}, (28):1413--1460, 2004.

\bibitem[Mos89]{Mos89}
Paul Moszkowski.
\newblock A solution to a problem of {D}\'enes: a bijection between trees and
  factorizations of cyclic permutations.
\newblock {\em European J. Combin.}, 10(1):13--16, 1989.

\bibitem[NO09]{NO09}
Alexandru Nica and Ion Oancea.
\newblock Posets of annular non-crossing partitions of types {$B$} and {$D$}.
\newblock {\em Discrete Math.}, 309(6):1443--1466, 2009.

\bibitem[Rat06]{Rat06}
Amarpreet Rattan.
\newblock Permutation factorizations and prime parking functions.
\newblock {\em Ann. Comb.}, 10(2):237--254, 2006.

\bibitem[Rei97]{Rei97}
Victor Reiner.
\newblock Non-crossing partitions for classical reflection groups.
\newblock {\em Discrete Math.}, 177(1-3):195--222, 1997.

\bibitem[Str96]{Str96}
Volker Strehl.
\newblock Minimal transitive products of transpositions---the reconstruction of
  a proof of {A}.\ {H}urwitz.
\newblock {\em S\'em. Lothar. Combin.}, 37:Art.\ S37c, 12 pp. (electronic),
  1996.

\end{thebibliography}


\end{document}